\newtheorem{theorem}{Theorem}[section]
\newtheorem{assumption}{Assumption}[section]
\newtheorem{definition}{Definition}[section]
\newtheorem{remark}{Remark}[section]
\newtheorem{corollary}{Corollary}[section]
\newtheorem{lemma}{Lemma}[section]
\begin{document}

\begin{frontmatter}
\title{On the limit of a two-phase flow problem in thin porous media domains of Brinkman-type}

\author{Alaa Armiti-Juber}
\address{Institute of Mechanics, Structural Analysis, and Dynamics of Aerospace Structures, University of Stuttgart, Pfaffenwaldring 27, 70569 Stuttgart, Germany}
\ead{alaa.armiti-juber@isd.uni-stuttgart.de}



\begin{abstract}
We study the process of two-phase flow in thin porous media domains of Brinkman-type. This is generally described by a model of coupled, mixed-type differential equations of fluids' saturation and pressure. To reduce the model complexity, different approaches that utilize the thin geometry of the domain have been suggested. 

We focus on a reduced model that is formulated as a single nonlocal evolution equation of saturation. It is derived by applying standard asymptotic analysis to the dimensionless coupled model, however, a rigid mathematical derivation is still lacking. In this paper, we prove that the reduced model is the analytical limit of the coupled two-phase flow model as the geometrical parameter of domain's width--length ratio tends to zero. Precisely, we prove the convergence of weak solutions for the coupled model to a weak solution for the reduced model as the geometrical parameter vanishes.
\end{abstract}

\begin{keyword}
Two-phase flow \sep Brinkman regimes \sep Model reduction in thin domains \sep Mathematical convergence \sep Weak solution
\end{keyword}

\end{frontmatter}


\section{Introduction}
\label{intro}
We study the process of fluid displacement by another fluid in nondeformable saturated porous media domains of thin structure. This is crucial for many environmental and industrial applications. Examples are enhanced oil recovery in oil reservoirs and carbon dioxide sequestration in saline aquifers. Such processes are typically described by the two-phase flow model, which is a coupled system of mixed-type differential equations \cite{Helmig1997}. The complexity of the model and the large volume of such domains in the subsurface lead to high computational complexity. However, different approaches that utilize the thin geometry of these domains have been suggested to reduce the model's complexity. An example is the dimensional reduction approach by vertical integration in the field of petroleum studies \cite{Lake1989}, hydrogeology \cite{Bear1972,Helmig1997}, and carbon dioxide sequestration \cite{Gasda2009,Gasda2011}. Other examples are the asymptotic approach in Darcy \cite{Yortsos} and  Brinkman regimes \cite{Armiti-Juber2018} and the multiscale model approach \cite{Guo2014}. We refer to \cite{Armiti-Juber2018} for a comparative study on the accuracy and efficiency of the asymptotic approach over the others, in addition to an equivalence result with the multiscale approach. A recent approach suggests an adaptive algorithm that couples the dimensional reduction approach with the full model \cite{Becker2018}. It is based on a local criterion that determines the applicability of the reduced model.
\medskip 

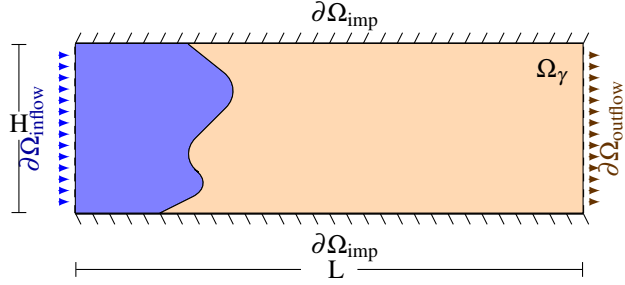
\begin{figure}
\centering
\begin{tikzpicture}[scale = 0.75,>=latex]
\draw[dashed, thick] (0,0) -- coordinate (y axis mid) (0,3);
\node[rotate=90, above=0.3cm, black!40!blue] at (y axis mid) {$\partial\Omega_{\text{inflow}}$};
\node[rotate=90, above=-7.4cm, black!60!orange] at (y axis mid) {$\partial\Omega_{\text{outflow}}$};
\draw[dashed, thick] (9,0) -- coordinate (x axis mid) (9,3);
\draw[very thick] (0,0) -- (9,0);
\draw[very thick] (0,3) -- (9,3); 

\draw[|-] (0,-1)--(4,-1) ;\draw[-|] (5,-1)--(9,-1);
\draw[very thick] (4.3,-1) node[anchor=west] {L};
\draw[|-] (-1,0)--(-1,1.3) ;\draw[-|] (-1,1.8)--(-1,3);
\draw[very thick] (-1,1.25) node[anchor=south] {H};

\draw[->,blue](-0.3,0.2)-- (-0.1,0.2);\draw[->,blue](-0.3,0.4)-- (-0.1,0.4);\draw[->,blue](-0.3,0.6)-- (-0.1,0.6);\draw[->,blue](-0.3,0.8)-- (-0.1,0.8);\draw[->,blue](-0.3,1)-- (-0.1,1);\draw[->,blue](-0.3,1.2)-- (-0.1,1.2);\draw[->,blue](-0.3,1.4)-- (-0.1,1.4);\draw[->,blue](-0.3,1.6)-- (-0.1,1.6);\draw[->,blue](-0.3,1.8)-- (-0.1,1.8);\draw[->,blue](-0.3,2)-- (-0.1,2);\draw[->,blue](-0.3,2.2)-- (-0.1,2.2);\draw[->,blue](-0.3,2.4)-- (-0.1,2.4);\draw[->,blue](-0.3,2.6)-- (-0.1,2.6);\draw[->,blue](-0.3,2.8)-- (-0.1,2.8);

\draw[->,black!60!orange](9.1,0.2)-- (9.3,0.2);\draw[->,black!60!orange](9.1,0.4)-- (9.3,0.4);\draw[->,black!60!orange](9.1,0.6)-- (9.3,0.6);\draw[->,black!60!orange](9.1,0.8)-- (9.3,0.8);\draw[->,black!60!orange](9.1,1)-- (9.3,1);\draw[->,black!60!orange](9.1,1.2)-- (9.3,1.2);\draw[->,black!60!orange](9.1,1.4)-- (9.3,1.4);\draw[->,black!60!orange](9.1,1.6)-- (9.3,1.6);\draw[->,black!60!orange](9.1,1.8)-- (9.3,1.8);\draw[->,black!60!orange](9.1,2)-- (9.3,2);\draw[->,black!60!orange](9.1,2.2)-- (9.3,2.2);\draw[->,black!60!black!60!orange](9.1,2.4)-- (9.3,2.4);\draw[->,black!60!orange](9.1,2.6)-- (9.3,2.6);\draw[->,black!60!orange](9.1,2.8)-- (9.3,2.8);

\draw[-] (0,0) -- (0.1,-0.2);\draw[-] (0.3,0) -- (0.4,-0.2);\draw[-] (0.6,0) -- (0.7,-0.2);\draw[-] (0.9,0) -- (1,-0.2);\draw[-] (1.2,0) -- (1.3,-0.2);\draw[-] (1.5,0) -- (1.6,-0.2);\draw[-] (1.8,0) -- (1.9,-0.2);\draw[-] (2.1,0) -- (2.2,-0.2);\draw[-] (2.4,0) -- (2.5,-0.2);\draw[-] (2.7,0) -- (2.8,-0.2);\draw[-] (3,0) -- (3.1,-0.2);\draw[-] (3.3,0) -- (3.4,-0.2);\draw[-] (3.6,0) -- (3.7,-0.2);\draw[-] (3.9,0) -- (4,-0.2);\draw[-] (4.2,0) -- (4.3,-0.2);\draw[-] (4.5,0) -- (4.6,-0.2);\draw[-] (4.8,0) -- (4.9,-0.2);\draw[-] (5.1,0) -- (5.2,-0.2);\draw[-] (5.4,0) -- (5.5,-0.2);\draw[-] (5.7,0) -- (5.8,-0.2);\draw[-] (6,0) -- (6.1,-0.2);\draw[-] (6.3,0) -- (6.4,-0.2);\draw[-] (6.6,0) -- (6.7,-0.2);\draw[-] (6.9,0) -- (7,-0.2);\draw[-] (7.2,0) -- (7.3,-0.2);\draw[-] (7.5,0) -- (7.6,-0.2);\draw[-] (7.8,0) -- (7.9,-0.2);\draw[-] (8.1,0) -- (8.2,-0.2);\draw[-] (8.4,0) -- (8.5,-0.2);\draw[-] (8.7,0) -- (8.8,-0.2);\draw[-] (9,0) -- (9.1,-0.2);

\draw[-] (0,3) -- (0.1,3.2);\draw[-] (0.3,3) -- (0.4,3.2);\draw[-] (0.6,3) -- (0.7,3.2);\draw[-] (0.9,3) -- (1,3.2);\draw[-] (1.2,3) -- (1.3,3.2);\draw[-] (1.5,3) -- (1.6,3.2);\draw[-] (1.8,3) -- (1.9,3.2);\draw[-] (2.1,3) -- (2.2,3.2);\draw[-] (2.4,3) -- (2.5,3.2);\draw[-] (2.7,3) -- (2.8,3.2);\draw[-] (3,3) -- (3.1,3.2);\draw[-] (3.3,3) -- (3.4,3.2);\draw[-] (3.6,3) -- (3.7,3.2);\draw[-] (3.9,3) -- (4,3.2);\draw[-] (4.2,3) -- (4.3,3.2);\draw[-] (4.5,3) -- (4.6,3.2);\draw[-] (4.8,3) -- (4.9,3.2);\draw[-] (5.1,3) -- (5.2,3.2);\draw[-] (5.4,3) -- (5.5,3.2);\draw[-] (5.7,3) -- (5.8,3.2);\draw[-] (6,3) -- (6.1,3.2);\draw[-] (6.3,3) -- (6.4,3.2);\draw[-] (6.6,3) -- (6.7,3.2);\draw[-] (6.9,3) -- (7,3.2);\draw[-] (7.2,3) -- (7.3,3.2);\draw[-] (7.5,3) -- (7.6,3.2);\draw[-] (7.8,3) -- (7.9,3.2);\draw[-] (8.1,3) -- (8.2,3.2);\draw[-] (8.4,3) -- (8.5,3.2);\draw[-] (8.7,3) -- (8.8,3.2);\draw[-] (9,3) -- (9.1,3.2);

\draw[fill=blue!50] (0,3)-- (0,0) -- (1.5,0)[rounded corners=10pt]--(2.5,0.5)--(1.8,1)--(3,2.2)--(2,3);
\draw[fill=orange!30]  (9,3)--(9,0)-- (1.5,0)[rounded corners=10pt]--(2.5,0.5)--(1.8,1)--(3,2.2) -- (2,3);
\draw[thick] (8,2.5)  node[anchor=west] {$\Omega_{\gamma}$};
\draw[thick] (4,3.5)  node[anchor=west] {$\partial\Omega_{\text{imp}}$};
\draw[thick] (4,-0.6)  node[anchor=west] {$\partial\Omega_{\text{imp}}$};
\end{tikzpicture}
\caption{An illustration of the displacement process of a wetting phase into a thin domain $\Omega_{\gamma}$ \cite{Armiti-Juber2018}. }
\label{fig:Omega}
\end{figure}

In this paper, we explore the limit of the two-phase flow model in porous media domains of Brinkman type as the width--length ratio of the domain tends to zero. We prove the analytical convergence of weak solutions for this model to weak solutions for the reduced model resulting from the asymptotic approach in \cite{Armiti-Juber2018}. In fact, it is shown in \cite{Armiti-Juber2018} that numerical solutions of the two-phase flow model converge to those of the reduced model as the domain's geometrical ratio tends to zero. Similar results on the analytical convergence of mathematical models have been established for other different applications. Examples are the convergence of the two-phase flow model as the viscosity of one of the phases approaches zero \cite{Henry2012}, the convergence of a mathematical model describing crystal dissolution in thin strips as the thickness vanishes \cite{Duijn04crystaldissolution}, and the convergence of a reactive transport equation in fractured porous media as the thickness of the fractures tends to zero \cite{pop17fracture}.

We consider the homogenized flow of two incompressible immiscible fluids in the rectangular domain $\Omega_{\gamma}=(0,L)\times(0,H)$ such that $H\ll L$ (Figure \ref{fig:Omega}), where $\gamma\coloneqq H/L$ is the geometrical parameter. Using dimensionless variables, governing equations for such flows are given by the so-called Brinkman two-phase flow model (BTP-model),
\begin{align}
\begin{array}{rl}
\partial_{t}S-\beta_1 \partial_{txx} S-\beta_2\partial_{tzz} S+\partial_{x}\big(f(S)U\big)+ \partial_{z}\big(f(S)Q\big)&=0,\\
U&=-\lambda_{tot}(S) \partial_{x} p,\\
 \gamma^2  Q &=-\lambda_{tot}(S ) \partial_{z} p ,\\  
  \partial_{x}U +\partial_{z}Q  &=0
\end{array}
\label{eq:BTP}
\end{align}
in the dimensionless domain $\Omega\times(0,T)$, where $\Omega=(0,1)\times(0,1)$ and $T>0$ \cite{Armiti-Juber2018}. We refer to \ref{sec:dimensionless} for details on the derivation of this model. The unknowns here are the saturation $S =S (x,z,t) \in [0,1]$ of the wetting (or invading) phase and the global pressure $p =p (x,z,t)\in \mathbb{R}$. The component $U =U (x,z,t)\in\mathbb{R}$, for any $(x,z,t)\in \Omega\times(0,T)$ is the horizontal velocity and $Q =Q (x,z,t)\in\mathbb{R}$ is the vertical one. The total mobility function $\lambda_{tot}=\lambda_{tot}(S ) \in (0,\infty)$ is the mobility sum of both phases. We refer to \cite{Helmig1997} for possible choices for the mobilities. The function $f=f(S )\in[0,1]$ is the given fractional flow function, which is determined using the fluids' mobilities and viscosities. The parameters $\beta_1=\mu_e/L^2$ and $\beta_2=\mu_e/H^2$, where $\mu_e$ is the effective viscosity, determine the flow regime. The case $\mu_e=0$ results in the so-called Darcy regime, while $\mu_e>0$ is referred to as the Brinkman regime \cite{Helmig1997}.

The reduced model resulting from the asymptotic approach in Brinkman regimes is derived in \cite{Armiti-Juber2018}. This is done by applying standard asymptotic analysis, in terms of the geometrical parameter $\gamma$, to the dimensionless BTP-model \eqref{eq:BTP}. In the limit, this leads to a pressure function independent of the vertical coordinate, a result that is usually called the vertical equilibrium, see e.g. \cite{Guo2014,Yortsos}. This result is then used to reformulate the velocity components in the reduced model as nonlocal operators of saturation. For details on the derivation of this model we refer to \ref{sec:asymptotic}. The reduced model is a nonlocal nonlinear evolution equation of saturation. It is given as
\begin{eqnarray}
\partial_{t}S +\partial_{x}\Bigl(f(S)U[S]\Bigr)+\partial_{z}\Bigl(f(S)Q[S]\Bigr) -\beta_1 \partial_{txx}S-\beta_2 \partial_{tzz}S=0
\label{eq:BVE}
\end{eqnarray}
in $\Omega\times(0,T)$, with the velocity components
\begin{equation}
U[S]=\dfrac{\hat{U}_{\text{inflow}}\lambda_{tot}(S)}{\int_{0}^{1}\lambda_{tot}(S) dz},\quad \quad Q[S]= -\partial_{x}\int_{0}^{z}U[S(\cdot,r,\cdot)]dr.
\label{eq:velocity}
\end{equation}
Here, $\hat{U}_{\text{inflow}}=\hat{U}_{\text{inflow}}(t)$ is the vertically averaged horizontal velocity at the left boundary of the domain. This inflow velocity can be evaluated using a one-dimensional elliptic equation of the vertically averaged pressure $\hat{p}=\int_0^1 p(.,z,.)\,dz$. However, it is set in \cite{Yortsos} to be a constant $\hat{U}_{\text{inflow}}=1$. In \cite{Armiti-Juber2018} it is eliminated from the model by rescaling the time $t$ variable using $t\mapsto\bar{t}=\int_0^t \hat{U}_{\text{inflow}}(r)dr+\hat{U}_{\text{inflow}}(0)t$. In addition, the definition of the velocity components $U$ and $Q$ in \eqref{eq:velocity} still fulfills the incompressibility constraint
\begin{align}
\partial_x U + \partial_z Q=0.
\label{eq:incompressible}
\end{align} 

\medskip

The equations \eqref{eq:BVE} and \eqref{eq:velocity} are called here as in \cite{Armiti-Juber2018}, the Brinkman Vertical Equilibrium model (BVE-model). This model is a proper reduction of the full BTP-model \eqref{eq:BTP} in thin domains as it describes the vertical dynamics in the domain. Moreover, it is computationally more efficient than the full mixed BTP-model for saturation and global pressure (see \cite{Armiti-Juber2018}). This is a consequence of the velocity equations in \eqref{eq:velocity} computed from saturation directly, without solving an elliptic equation for the global pressure as in full BTP-model \eqref{eq:BTP}.
\medskip

The main goal of this paper is a rigid mathematical derivation of the reduced BVE-model \eqref{eq:BVE} and \eqref{eq:velocity} from the full BTP-model \eqref{eq:BTP}. We do this by showing that the reduced model is the analytical limit of the full BTP-model in domains with vanishing width--length ratio $\gamma$. This paper is structured as follows. In section \ref{sec:Preliminaries} we choose the initial and boundary conditions that fit to the two-phase displacement process.  Then, we give the definitions of weak solutions for the full BTP-model \eqref{eq:BTP} and the reduced BVE-model \eqref{eq:BVE} and \eqref{eq:velocity}. After that, section \ref{sec:a priori} proves a set a priori estimates on a sequence of weak solutions for the BTP-model. These are essential to prove the convergence of the sequence in section \ref{sec:convergence} as the ratio $\gamma$ approaches zero. Section \ref{sec:numerical} presents an example that shows the numerical convergence of full BTP-model to the reduced BVE-model as $\gamma$ vanishes. Section \ref{sec:conclusion} concludes the paper. Finally, the derivation of the dimensionless BTP-model \eqref{eq:BTP} is summarized in \ref{sec:dimensionless}, while the derivation of the BVE-model \eqref{eq:BVE}, \eqref{eq:velocity} using the asymptotic approach as in \cite{Armiti-Juber2018} is summarized in \ref{sec:asymptotic}.

\section{Preliminaries}
\label{sec:Preliminaries}
In this section, we give the initial and boundary conditions associated with the displacement process in the dimensionless domain $\Omega$. Then we provide the definition of weak solution for the BTP-model and the BVE-model.

The BTP-model is closed with the initial and boundary conditions 
\begin{align}
\begin{array}{rll}
 S(\cdot,\cdot,0)&=S^0 \quad\quad &\text{ in } \Omega, \\
 S &= S_{\text{inflow}}\quad\quad &\text{ on } \partial \Omega_{\text{inflow}}\times[0,T], \\
 S &= 0 \quad\quad &\text{ on } \partial \Omega_{\text{imp}}\cup\partial\Omega_{\text{outflow}}\times[0,T], \\
 \nabla p \cdot \textbf{n} & = q \quad\quad &\text{ on } \partial \Omega\times[0,T], \\
 \int_{\Omega} p(x,z,t)\,dx\,dz & =0 \quad\quad &\text{ on }t\in(0,T),\\
 p & =p_D \quad\quad & \text{ on } \partial \Omega_{\text{imp}}\times[0,T], \\
 Q & =0 \quad\quad & \text{ on } \partial \Omega_{\text{imp}}\times[0,T], 
 \end{array}
 \label{eq:IBC-BTP}
\end{align}
where $S_{\text{inflow}}=S_{\text{inflow}}(z)$ and $q=q(x,z,t)$ are given functions and $p_D$ is a constant. Note that $\partial \Omega_{\text{inflow}}=\{0\}\times (0,1)$ is the inflow boundary, $\Omega_{\text{outflow}}=\{1\}\times(0,1)$ is the outflow boundary, and $\Omega_{\text{imp}}=(0,1)\times\{0,1\}$ corresponds to the impermeable lower and upper boundaries (Figure \ref{fig:Omega}). We also use the notations $\Omega_T=\Omega\times (0,T)$ and impose the following assumptions. 
\begin{assumption}
\label{ass:Bexistence}
\begin{enumerate}
 \item The bounded domain $\Omega\subset\mathbb{R}^2$ has a Lipschitz continuous boundary $\partial \Omega$ and $0<T<\infty$.
 \item The inflow saturation $S_{\text{inflow}}$ is bounded in $L^{\infty}((0,1)\times(0,T))$. 
 \item We require $S^0\in H^1(\Omega)$ and $S^0=S_{\text{inflow}}$ at $\partial\Omega_{\text{inflow}}$.
 \item The pressure function $q$ satisfies $q\in L^2(\partial\Omega\times(0,T))$.
 \item The fractional flow function $f\in C^1((0,1))$ is Lipschitz continuous, bounded, nonnegative, monotone increasing and $f(0)=0$, such that there exist numbers $M,\,L>0$ with $f\leq M,\,f'\leq L$.
 \item The total mobility function $\lambda_{tot}\in C^1((0,1))$ is Lipschitz continuous, bounded and strictly positive, such that there exist numbers $a,\, M,\,L>0$ with $0<a<\lambda_{tot}\leq M$ and $\lvert\lambda_{tot}'\rvert\leq L$.
\end{enumerate} 
\end{assumption}

\begin{remark}
 Note that the velocity components at the boundaries of the domain can be evaluated using the velocity equations in \eqref{eq:BTP} and the boundary conditions on saturation and pressure in \eqref{eq:IBC-BTP}. For example, we define the velocity $U_{\text{inflow}}=U_{\text{inflow}}(z,t)$ at the inflow boundary as
 \begin{align}
 U_{\text{inflow}}=\lambda_{tot}(S_{\text{inflow}}) q|_{\partial\Omega_{\text{inflow}}},
  \label{eq:Uinflow}
 \end{align}
 and 
 the velocity $U_{\text{outflow}}=U_{\text{outflow}}(z,t)$ at the outflow boundary as
 \begin{align}
 U_{\text{outflow}}=\lambda_{tot}(0) q|_{\partial\Omega_{\text{outflow}}}.
  \label{eq:Uoutflow}
 \end{align}
Using Assumption \ref{ass:Bexistence}(4) and \ref{ass:Bexistence}(6), we have $U_{\text{inflow}},\, U_{\text{outflow}}\in L^2((0,1)\times(0,T))$. In addition, the constant pressure $p_D$ at the boundary $\partial\Omega_{\text{imp}}$ leads to 
\begin{align}
 \begin{array}{cll}
  U & = 0 &\quad \text{on} \quad \partial\Omega_{\text{imp}}.
 \end{array}
 \label{eq:U-imp}
\end{align}
\label{rem:U-boundary}
\end{remark}

\begin{definition}
 For any $\gamma>0$, we call $(S^{\gamma}, p^{\gamma}, U^{\gamma}, Q^{\gamma})$ a weak solution of the BTP-model \eqref{eq:BTP} with the initial and boundary conditions \eqref{eq:IBC-BTP} if 
 \begin{enumerate}
  \item $S^{\gamma} \in H^1(0,T;H^1(\Omega))$, $p^{\gamma} \in L^2(0,T;H^1(\Omega))$, and $U^{\gamma},\,Q^{\gamma}\in L^2(\Omega\times(0,T))$ with
  \begin{align}
   \int_{0}^{T}\int_{\Omega} &\big(\partial_{t}S^{\gamma} \phi  - f(S^{\gamma}) U^{\gamma}\partial_x \phi - f(S^{\gamma})Q^{\gamma}\partial_z\phi \big) \,dx\,dz\,dt\nonumber\\& +\beta_1\int_{0}^{T}\int_{\Omega} \partial_{tx} S^{\gamma}\partial_{x}\phi \,dx\,dz\,dt+\beta_2\int_{0}^{T}\int_{\Omega} \partial_{tz} S^{\gamma}\partial_{z}\phi \,dx\,dz\,dt\nonumber\\&=\int_0^T\int_{\partial\Omega_{\text{inflow}}}f(S^{\gamma}_{\text{inflow}})U^{\gamma}_{\text{inflow}}\phi(0,z,t)\,dz\,dt,
    \label{eq:weakmodel}
 \end{align}
 for any test function $\phi \in L^2(0,T;C^0(\Omega))$. 
 \item The velocity components satisfy
 \begin{align}
  \int_{\Omega} U^{\gamma} \psi \,dx\,dz =- \int_{\Omega} \lambda_{tot}(S^{\gamma})\partial_x p^{\gamma} \psi \,dx\,dz,
  \label{eq:weakU}
 \end{align}
 and
 \begin{align}
 \gamma^2 \int_{\Omega} Q^{\gamma} \psi \,dx\,dz = -\int_{\Omega} \lambda_{tot}(S^{\gamma})\partial_z p^{\gamma} \psi \,dx\,dz,
   \label{eq:weakW}
 \end{align}
 for any test function $\psi \in L^2(\Omega)$ and almost everywhere in $(0,T)$.
 \item The following two weak incompressibility relations
\begin{align}
 \int_{\Omega} \lambda_{tot}(S^{\gamma})\partial_x p^{\gamma}\partial_x \theta +  \frac{1}{\gamma^2}  \lambda_{tot}(S^{\gamma})\partial_z p^{\gamma} \partial_z\theta\,dx\,dz=\int_{\partial \Omega}\lambda_{tot}(S^{\gamma})q \theta \,d\sigma,
 \label{eq:weakincomm-gamma}
\end{align}
and 
\begin{align}
\int_{\Omega} \big( U^{\gamma}\partial_x \theta +& Q^{\gamma}\partial_z\theta\big) \,dx\,dz=-\int_{\partial \Omega_{\text{inflow}}} U_{\text{inflow}}\theta(0,z) \,dz,
 \label{eq:weakincomm}
\end{align}
hold for any test function $\theta \in C^0(\Omega)$ and almost everywhere in $(0,T)$, with $\theta(1,z)=0$.
\item $S^{\gamma}(.,.,0)=S^0$ almost everywhere in $\Omega$.
 \end{enumerate}
\label{def:BTP-weaksolution}
\end{definition}

\begin{remark}
 Definition \ref{def:BTP-weaksolution} implies that weak solutions for the BTP-model \eqref{eq:BTP} satisfy
\begin{align}
 S^{\gamma} \in C([0,T];H^1(\Omega)).
 \label{eq:continuity}
\end{align}
\label{rem:time-continuity}
\end{remark}

\begin{definition}
A function $S\in H^1(0,T;H^1(\Omega))$ is called a weak solution of the BVE-model \eqref{eq:BVE}, \eqref{eq:velocity} and \eqref{eq:incompressible} with the initial and boundary conditions \eqref{eq:IBC-BTP} whenever the following conditions are fulfilled,
\begin{enumerate}
 \item $U[S],\,Q[S] \in L^2(\Omega_T)$ and 
 \begin{align}
  \int_{0}^{T}\int_{\Omega} \big(\partial_{t}S \phi-& f(S) U[S]\partial_x \phi - f(S)Q[S]\partial_z\phi \big) \,dx\,dz\,dt\nonumber\\ +&\beta_1\int_{0}^{T}\int_{\Omega}  \partial_{tx}S \partial_x\phi \,dx\,dz\,dt+\beta_2\int_{0}^{T}\int_{\Omega}  \partial_{tz}S \partial_z\phi \,dx\,dz\,dt\nonumber\\&=\int_0^T\int_{\partial\Omega_{\text{inflow}}}f(S_{\text{inflow}})U_{\text{inflow}}\phi(0,z,t)\,dz\,dt,
 \label{eq:weaksat-BVE}
\end{align}
holds for all test functions $\phi\in L^{2}(0,T;C^{0}(\Omega))$.
\item The velocity components satisfy
\begin{align}
 \int_{\Omega} U \psi \,dx\,dz = \int_{\Omega} \frac{\hat{U}_{\text{inflow}}\lambda_{tot}(S)}{\int_{0}^{1}\lambda_{tot}(S) dz} \psi \,dx\,dz,
 \label{eq:weakU-BVE}
 \end{align}
 
 \begin{align}
 \int_{\Omega} Q \psi \,dx\,dz = -\int_{\Omega} \partial_{x}\int_{0}^{z}U[S(\cdot,r,\cdot)]dr \psi \,dx\,dz,
  \label{eq:weakW-BVE}
 \end{align}
for any $\psi \in L^2(\Omega)$ and almost everywhere in $(0,T)$.

\item The weak incompressibility property 
\begin{align}
       \int_{\Omega} \big( U[S]\partial_x \theta &+ Q[S]\partial_z\theta\big) \,dx\,dz\,dt=-\int_{\partial \Omega_{\text{inflow}}} U_{\text{inflow}}\theta(0,z) \,dz,
       \label{eq:weakincomp-limit}
      \end{align}
holds for all test functions $\theta\in C^{0}(\Omega)$ and almost everywhere in time, with $\theta(1,z)=0$.
\item $S(.,.,0)=S^0$ almost everywhere in $\Omega$.
\end{enumerate}
\label{def:BVE-weaksolution}
\end{definition}

\section{A priori Estimates}
\label{sec:a priori}

In the following, we prove a set of a priori estimates on the components of the sequence of weak solutions $\{(S^{\gamma},p^{\gamma},U^{\gamma},Q^{\gamma})\}_{\gamma>0}$ for the BTP-model \eqref{eq:BTP}. These are essential for the convergence analysis as $\gamma$ tends to zero in the next section. Note that the existence of weak solutions for the BTP-model is proved in \cite{Coclite2014}, while for the BVE-model is proved in \cite{Armiti-Juber2019}. 

\begin{lemma}
Let $\{(S^{\gamma},p^{\gamma},U^{\gamma},Q^{\gamma})\}_{\gamma>0}$ be a sequence of weak solutions for the BTP-model \eqref{eq:BTP}. If Assumption \ref{ass:Bexistence} holds, then the sequence $\{S^{\gamma}\}_{\gamma>0}$ satisfies the estimate
 \begin{align*}
 \sup_{t\in[0,T]} \Big(\Vert S^{\gamma}(t)\Vert_{L^2(\Omega)}& + \beta_1 \Vert \partial_x S^{\gamma}(t)\Vert_{L^2(\Omega)}+ \beta_2 \Vert \partial_z S^{\gamma}(t)\Vert_{L^2(\Omega)}\Big)\\&\leq  \Vert S^0\Vert^2_{L^2(\Omega)}+\beta_1 \Vert \partial_x S^0\Vert^2_{L^2(\Omega)}+\beta_2 \Vert \partial_z S^0\Vert^2_{L^2(\Omega)}+C_{\text{inflow}},
 \end{align*}
 where $C_{\text{inflow}}$ is a constant depending on the data at the inflow boundary only.
 \label{lem:apriori1}
\end{lemma}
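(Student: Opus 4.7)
The plan is to derive the estimate by a standard energy argument: substitute $\phi=S^\gamma$ itself into the weak formulation \eqref{eq:weakmodel}. Since Definition \ref{def:BTP-weaksolution} requires test functions in $L^2(0,T;C^0(\Omega))$ whereas $S^\gamma$ only lies in $H^1(0,T;H^1(\Omega))$, the first step is a routine density argument, approximating $S^\gamma$ by smooth functions in order to legitimate this substitution; the time continuity provided by Remark \ref{rem:time-continuity} makes this step clean.

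Once testing is justified, the three time-derivative terms in \eqref{eq:weakmodel} produce $\tfrac{1}{2}\tfrac{d}{dt}\|S^\gamma\|_{L^2(\Omega)}^2$, $\tfrac{\beta_1}{2}\tfrac{d}{dt}\|\partial_x S^\gamma\|_{L^2(\Omega)}^2$, and $\tfrac{\beta_2}{2}\tfrac{d}{dt}\|\partial_z S^\gamma\|_{L^2(\Omega)}^2$ by the chain rule. For the convective contribution, I introduce the antiderivative $F(s)=\int_0^s f(\xi)\,d\xi$ so that $f(S^\gamma)\partial_x S^\gamma = \partial_x F(S^\gamma)$ and similarly in $z$. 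Rather than integrating by parts directly, I apply the weak incompressibility identity \eqref{eq:weakincomm} with the test function $\theta=F(S^\gamma)$; this is admissible because $F(S^\gamma)$ vanishes at the outflow (using $S^\gamma=0$ there together with $F(0)=0$), and the density argument extends \eqref{eq:weakincomm} from $C^0$ to $H^1$-functions with this trace property. The outcome is that the convection collapses into the single inflow boundary integral $\int_{\partial\Omega_{\text{inflow}}} F(S_{\text{inflow}})U_{\text{inflow}}\,dz$.

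Combining everything, and noting that with $\phi=S^\gamma$ the right-hand side of \eqref{eq:weakmodel} equals $\int_{\partial\Omega_{\text{inflow}}} f(S_{\text{inflow}})S_{\text{inflow}}U_{\text{inflow}}\,dz$, the energy identity reduces to $\tfrac{1}{2}\tfrac{d}{dt}\bigl(\|S^\gamma\|_{L^2(\Omega)}^2 + \beta_1\|\partial_x S^\gamma\|_{L^2(\Omega)}^2 + \beta_2\|\partial_z S^\gamma\|_{L^2(\Omega)}^2\bigr) = \int_{\partial\Omega_{\text{inflow}}}\bigl(f(S_{\text{inflow}})S_{\text{inflow}}-F(S_{\text{inflow}})\bigr) U_{\text{inflow}}\,dz$. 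Cauchy--Schwarz together with the $L^\infty$ bound on $S_{\text{inflow}}$ (Assumption \ref{ass:Bexistence}(2)), the boundedness of $f$ (Assumption \ref{ass:Bexistence}(5)), and the $L^2$ bound on $U_{\text{inflow}}=\lambda_{tot}(S_{\text{inflow}})q|_{\partial\Omega_{\text{inflow}}}$ (Remark \ref{rem:U-boundary} together with Assumptions \ref{ass:Bexistence}(4),(6)) show this boundary integral is controlled by a constant $C_{\text{inflow}}$ depending only on boundary data and, crucially, independent of $\gamma$, since $U_{\text{inflow}}$ itself is $\gamma$-independent. Integrating in time from $0$ to $t$ and taking the supremum over $t\in[0,T]$ then delivers the claimed estimate.

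The main obstacle is not the manipulation itself but the justification of the two approximation steps: using $S^\gamma$ as a test function in \eqref{eq:weakmodel}, and using $F(S^\gamma)$ as a test function in the weak incompressibility \eqref{eq:weakincomm}. Once these technicalities are dispatched, the estimate is a classical energy inequality for a Brinkman-regularized transport equation, and the $\gamma$-independence of $C_{\text{inflow}}$ follows immediately from the explicit form of the inflow velocity.
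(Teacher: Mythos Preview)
Your proposal is correct and follows essentially the same route as the paper: test \eqref{eq:weakmodel} with $S^\gamma$ (the paper uses $S^\gamma\chi_{[0,t)}$), rewrite the convective terms via the primitive $F(S)=\int_0^S f$, invoke the weak incompressibility \eqref{eq:weakincomm} with $\theta=F(S^\gamma)$ to reduce them to an inflow boundary integral, and then bound that integral using Assumptions \ref{ass:Bexistence}(2),(5),(6) and Remark \ref{rem:U-boundary}. Your explicit attention to the density arguments needed to justify $S^\gamma$ and $F(S^\gamma)$ as admissible test functions is a welcome addition that the paper leaves implicit.
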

\begin{proof}
We Choose the test function $\phi=S^{\gamma}\chi_{[0,t)}$ in equation \eqref{eq:weakmodel}, where $\chi_{[0,t)}$ is the characteristic function and $t\in (0,T]$ is arbitrary. Then, we obtain
 \begin{align}
   \int_{0}^{t}\int_{\Omega} &\big(\partial_{t}S^{\gamma} S^{\gamma}  - f(S^{\gamma}) U^{\gamma}\partial_x S^{\gamma} - f(S^{\gamma})Q^{\gamma}\partial_z S^{\gamma} \big) \,dx\,dz\,dt\nonumber\\& +\beta_1\int_{0}^{t}\int_{\Omega} \partial_{tx} S^{\gamma}\partial_{x} S^{\gamma} \,dx\,dz\,dt+\beta_2\int_{0}^{t}\int_{\Omega} \partial_{tz} S^{\gamma}\partial_{z} S^{\gamma} \,dx\,dz\,dt\nonumber\\&=\int_0^t\int_{\partial\Omega_{\text{inflow}}}f(S^{\gamma}_{\text{inflow}})U^{\gamma}_{\text{inflow}}S^{\gamma}_{\text{inflow}}\,dz\,dt.
  \label{eq:aprioriess1-1}
 \end{align}
Using the incompressibility relation \eqref{eq:weakincomm} and the boundary condition on the outflow boundary, the second and third terms on the left side of the above equation satisfy,
 \begin{align}
   \int_{0}^{t}\int_{\Omega} f(S^{\gamma}) \textbf{V}^{\gamma}\cdot \nabla S^{\gamma}\,dx\,dz\,dt =&  \int_{0}^{t}\int_{\Omega} \textbf{V}^{\gamma}\cdot \nabla  F(S^{\gamma})\,dx\,dz\,dt, \nonumber\\ =& -\int_0^t\int_{\partial\Omega_{\text{inflow}}} U_{\text{inflow}} F(S_{\text{inflow}})\,dz\,dt,
   \label{eq:aprioriess1-2}
 \end{align}
where $\textbf{V}^{\gamma}=(U^{\gamma}, Q^{\gamma})^T$ and $F(S)= \int_0^{S} f(q)dq$. Integrating the first term on the left side of \eqref{eq:aprioriess1-1} and using the time-continuity of $S^{\gamma}$ in Remark \ref{rem:time-continuity}, we obtain
\begin{align}
 \int_{0}^{t}\int_{\Omega} \bigl(\partial_{t}S^{\gamma} S^{\gamma}\,dx\,dz\,dt = \frac{1}{2}\int_{0}^{t}\int_{\Omega} \partial_{t}(S^{\gamma})^2\,dx\,dz\,dt= \frac{1}{2}\big( \Vert S^{\gamma}(t) \Vert^2_{L^2(\Omega)} - \Vert S^0 \Vert^2_{L^2(\Omega)} \big).
 \label{eq:aprioriess1-3}
\end{align}
In the same way, we have
\begin{align}
\beta_1\int_{0}^{t}\int_{\Omega} \partial_{tx} S^{\gamma}\partial_{x} S^{\gamma} \,dx\,dz\,dt=\frac{\beta_1}{2}\big( \Vert \partial_x S^{\gamma}(t) \Vert^2_{L^2(\Omega)} - \Vert \partial_x S^0 \Vert^2_{L^2(\Omega)} \big),
 \label{eq:aprioriess1-4}
\end{align}
and 
\begin{align}
 \beta_2\int_{0}^{t}\int_{\Omega} \partial_{tz} S^{\gamma}\partial_{z} S^{\gamma} \,dx\,dz\,dt=\frac{\beta_2}{2}\big( \Vert \partial_z S^{\gamma}(t) \Vert^2_{L^2(\Omega)} - \Vert \partial_z S^0 \Vert^2_{L^2(\Omega)} \big).
 \label{eq:aprioriess1-5}
\end{align}
Substituting equations \eqref{eq:aprioriess1-2}-\eqref{eq:aprioriess1-5} into \eqref{eq:aprioriess1-1} yields
 \begin{align}
 &\sup_{t\in[0,T]} \Big(\frac{1}{2}\Vert S^{\gamma}(t)\Vert^2_{L^2(\Omega)} +\frac{\beta_1}{2} \Vert \partial_x S^{\gamma}(t)\Vert^2_{L^2(\Omega)}+ \frac{\beta_2}{2} \Vert \partial_z S^{\gamma}(t)\Vert^2_{L^2(\Omega)}\Big)\nonumber \\& + \int_0^t\int_{\partial\Omega_{\text{inflow}}}U_{\text{inflow}}F(S_{\text{inflow}})\,dz\,dt = \frac{1}{2}\Vert S^0\Vert^2_{L^2(\Omega)}+\frac{\beta_1}{2} \Vert \partial_x S^0\Vert^2_{L^2(\Omega)} +\frac{\beta_2}{2} \Vert \partial_z S^0\Vert^2_{L^2(\Omega)}\nonumber\\& + \int_0^t\int_{\partial\Omega_{\text{inflow}}}U_{\text{inflow}}f(S_{\text{inflow}})S_{\text{inflow}}\,dz\,dt.
 \label{eq:estimate1-2}
 \end{align}
 The boundedness of $S_{\text{inflow}}$, $U_{\text{inflow}}$, and $f$ by Assumption \ref{ass:Bexistence}(2), \ref{ass:Bexistence}(5), and\ref{ass:Bexistence}(6), respectively, implies
  \begin{align*}
 \sup_{t\in[0,T]} \Big(\Vert S^{\gamma}(t)\Vert_{L^2(\Omega)}& + \beta_1 \Vert \partial_x S^{\gamma}(t)\Vert_{L^2(\Omega)}+ \beta_2 \Vert \partial_z S^{\gamma}(t)\Vert_{L^2(\Omega)}\Big)\\&\leq  \Vert S^0\Vert^2_{L^2(\Omega)}+\beta_1 \Vert \partial_x S^0\Vert^2_{L^2(\Omega)}+\beta_2 \Vert \partial_z S^0\Vert^2_{L^2(\Omega)}+C_{\text{inflow}},
 \end{align*}
 where $C_{\text{inflow}}=2M\Vert U_{\text{inflow}} \Vert_{L^{\infty}(\partial\Omega_{\text{inflow}}\times(0,T))}\Vert S_{\text{inflow}} \Vert_{L^{\infty}(\partial\Omega_{\text{inflow}}\times(0,T))}$. 

\end{proof}

The following lemma proves an estimate on the sequence of pressure's gradient. In the limit $\gamma\rightarrow 0$, the estimate is equivalent to the vertical equilibrium assumption (see e.g. \cite{Guo2014}. It is also essential to formulate the limit pressure as an operator of saturation.
\begin{lemma}
Let $\{(S^{\gamma},p^{\gamma},U^{\gamma},Q^{\gamma})\}_{\gamma>0}$ be a sequence of weak solutions for the BTP-model \eqref{eq:BTP}. If Assumption \ref{ass:Bexistence} holds, then there exists a constant $c>0$, independent of the parameter $\gamma$, such that the sequence $\{p^{\gamma}\}_{\gamma>0}$ satisfies the estimate
\begin{align*}
   (1-\gamma^2)\Vert \partial_z p^{\gamma}\Vert^2_{L^2(\Omega)} + \gamma^2  \Vert \partial_x p^{\gamma}\Vert^2_{L^2(\Omega)}\leq \frac{2c M^2\gamma^2 }{a^2} \Vert q \Vert^2_{L^2(\partial \Omega)}.
 \end{align*}
    \label{lem:apriori2}
\end{lemma}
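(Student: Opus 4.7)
\medskip
\noindent\textbf{Proof plan.} The natural strategy is to feed the weak incompressibility relation \eqref{eq:weakincomm-gamma} back on itself by using the pressure as its own test function. Since $p^{\gamma}(\cdot,t)\in H^{1}(\Omega)$ for almost every $t\in(0,T)$, a density argument (extending the test class in \eqref{eq:weakincomm-gamma} from $C^{0}(\Omega)$ to $H^{1}(\Omega)$) justifies the choice $\theta=p^{\gamma}$. Substituting and applying the two-sided bound $a\le \lambda_{tot}\le M$ from Assumption \ref{ass:Bexistence}(6), I would obtain
\begin{align*}
a\,\Vert \partial_{x}p^{\gamma}\Vert^{2}_{L^{2}(\Omega)} + \frac{a}{\gamma^{2}}\Vert \partial_{z}p^{\gamma}\Vert^{2}_{L^{2}(\Omega)}
\le M\int_{\partial\Omega}\lvert q\,p^{\gamma}\rvert\,d\sigma,
\end{align*}
and then multiply through by $\gamma^{2}$ to put both gradient terms on an equal footing:
\begin{align*}
a\gamma^{2}\Vert \partial_{x}p^{\gamma}\Vert^{2}_{L^{2}(\Omega)} + a\Vert \partial_{z}p^{\gamma}\Vert^{2}_{L^{2}(\Omega)}
\le M\gamma^{2}\int_{\partial\Omega}\lvert q\,p^{\gamma}\rvert\,d\sigma.
\end{align*}

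The boundary integral is the only place where information about $p^{\gamma}$ itself (not just its gradient) enters, and controlling it in a $\gamma$-uniform way is the crux of the argument. I would first apply the Cauchy--Schwarz inequality on $\partial\Omega$, obtaining $\Vert q\Vert_{L^{2}(\partial\Omega)}\Vert p^{\gamma}\Vert_{L^{2}(\partial\Omega)}$. The normalization $\int_{\Omega}p^{\gamma}\,dx\,dz=0$ from \eqref{eq:IBC-BTP} then allows use of the Poincaré--Wirtinger inequality on $\Omega$, followed by the trace theorem, to produce a constant $c>0$ (depending only on the fixed domain $\Omega$, hence independent of $\gamma$) such that
\begin{align*}
\Vert p^{\gamma}\Vert_{L^{2}(\partial\Omega)} \le c\,\Vert \nabla p^{\gamma}\Vert_{L^{2}(\Omega)}
= c\sqrt{\Vert \partial_{x}p^{\gamma}\Vert^{2}_{L^{2}(\Omega)}+\Vert \partial_{z}p^{\gamma}\Vert^{2}_{L^{2}(\Omega)}}.
\end{align*}

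The final step is a weighted Young inequality to separate $\Vert q\Vert^{2}$ from the gradient quantities. With parameter $\varepsilon=a\gamma^{2}$, the right-hand side splits as
\begin{align*}
M\gamma^{2}c\,\Vert q\Vert_{L^{2}(\partial\Omega)}\sqrt{\Vert \partial_{x}p^{\gamma}\Vert^{2}+\Vert \partial_{z}p^{\gamma}\Vert^{2}}
\le \frac{M^{2}c^{2}\gamma^{2}}{2a}\Vert q\Vert^{2}_{L^{2}(\partial\Omega)}
+\frac{a\gamma^{2}}{2}\bigl(\Vert \partial_{x}p^{\gamma}\Vert^{2}+\Vert \partial_{z}p^{\gamma}\Vert^{2}\bigr).
\end{align*}
Absorbing the gradient terms into the left produces coefficients $\tfrac{a\gamma^{2}}{2}$ and $\tfrac{a(2-\gamma^{2})}{2}$ in front of $\Vert \partial_{x}p^{\gamma}\Vert^{2}$ and $\Vert \partial_{z}p^{\gamma}\Vert^{2}$ respectively; multiplying by $2/a$ and using the elementary bound $2-\gamma^{2}\ge 1-\gamma^{2}$ gives exactly the claimed estimate, with the constant $c$ in the statement absorbing the squared trace/Poincaré constant.

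\medskip
\noindent\textbf{Main obstacle.} The delicate point is not any single inequality but the coordinated choice of Young weight so that \emph{both} surviving terms on the left carry $\gamma$-independent positive factors of the correct anisotropic form. A naive choice kills the $\Vert \partial_{x}p^{\gamma}\Vert^{2}$ contribution entirely, which is why one must balance $\varepsilon$ against $a\gamma^{2}$ precisely. Secondary care is required to verify that the Poincaré--Wirtinger and trace constants are $\gamma$-independent: here this is automatic because $\Omega=(0,1)^{2}$ is fixed and the anisotropic scaling has been absorbed into the coefficient $1/\gamma^{2}$ of the PDE itself, not into the geometry.
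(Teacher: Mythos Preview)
Your argument is correct and follows essentially the same route as the paper: test \eqref{eq:weakincomm-gamma} with $\theta=p^{\gamma}$, use the two-sided bound on $\lambda_{tot}$, control the boundary term via the trace theorem and Poincar\'e--Wirtinger (exploiting the zero-mean condition), then choose the Young parameter so that the gradient contribution can be absorbed. The only cosmetic difference is that you multiply through by $\gamma^{2}$ before applying Young, whereas the paper does so afterward; the resulting constants match up to relabelling.
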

\begin{proof}
 We choose the test function $\theta=p^{\gamma}$ in equation \eqref{eq:weakincomm-gamma}, then we have
 \begin{align*}
 \int_{\Omega} \lambda_{tot}(S^{\gamma})\Big(\big(\partial_x p^{\gamma}\big)^2 +  \frac{1}{\gamma^2}  \big(\partial_z p^{\gamma})^2\Big)\,dx\,dz = & \int_{\partial \Omega}\lambda_{tot}(S^{\gamma})q p^{\gamma} \,d\sigma.
 \end{align*}
 Using Assumption \ref{ass:Bexistence}(6) on the total mobility then applying Cauchy's inequality to the right side yields
 \begin{align*}
 a \Vert\partial_x p^{\gamma} \Vert^2_{L^2(\Omega)} + \frac{a}{\gamma^2} \Vert \partial_z p^{\gamma}\Vert^2_{L^2(\Omega)}\leq \,& \frac{M^2}{2\epsilon}\int_{\partial \Omega} q^2 \,d\sigma + \frac{\epsilon}{2}\int_{\partial\Omega} (p^{\gamma})^2\,d\sigma,
 \end{align*}
for any constant $\epsilon>0$. Applying the Trace theorem to the second term on the right side, then using Poincar\'e's inequality with the zero mean condition on the pressure (see the boundary conditions in \eqref{eq:IBC-BTP}) produces
  \begin{align*}
   a \Vert\partial_x p^{\gamma} \Vert^2_{L^2(\Omega)} + \frac{a}{\gamma^2}  \Vert \partial_z p^{\gamma}\Vert^2_{L^2(\Omega)}\leq \,& \frac{M^2}{2\epsilon}\int_{\partial \Omega} q^2 \,d\sigma +\frac{c\epsilon}{2}\Vert \nabla p^{\gamma} \Vert_{L^2(\Omega)},
 \end{align*}
where $c>0$ is a constant resulting from the above two Sobolev embedding theorems. Choosing $\epsilon=\frac{a}{c}$ and noting that $\gamma<1$, yields
\begin{align*}
 \frac{a}{2}\gamma^2\Vert\partial_x p^{\gamma} \Vert^2_{L^2(\Omega)} + \frac{a}{2}(1-\gamma^2) \Vert \partial_z p^{\gamma}\Vert^2_{L^2(\Omega)}\leq \,& \frac{cM^2}{2a}\gamma^2\int_{\partial \Omega} q^2 \,d\sigma .
\end{align*}
This simplifies to
\begin{align*}
 \gamma^2\Vert\partial_x p^{\gamma} \Vert^2_{L^2(\Omega)} + (1-\gamma^2) \Vert \partial_z p^{\gamma}\Vert^2_{L^2(\Omega)}\leq \,& \frac{c M^2\gamma^2}{a^2}\int_{\partial \Omega} q^2 \,d\sigma ,
\end{align*}
which is the required estimate.
\end{proof}

\begin{corollary}
If Assumption \ref{ass:Bexistence} holds, then there exists a constant $C>0$, independent of the parameter $\gamma$, such that the velocity components $U^{\gamma}$ and $ W^{\gamma}$ satisfy 
\begin{align*}
 \|U^{\gamma}\|_{L^2(\Omega_T)}&\leq C\Vert q \Vert^2_{L^2(\partial \Omega_T)}, \\ \|Q^{\gamma}\|_{L^2(\Omega_T)}&\leq \frac{C}{1-\gamma^2}\Vert q \Vert^2_{L^2(\partial \Omega_T)}.
\end{align*}
\label{cor:Vestimate}
\end{corollary}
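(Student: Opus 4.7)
The plan is to read off the velocities pointwise from the weak constitutive relations \eqref{eq:weakU}--\eqref{eq:weakW} and then insert the pressure bounds of Lemma \ref{lem:apriori2}. Since \eqref{eq:weakU} holds for every $\psi\in L^2(\Omega)$ and the inner product is non-degenerate on $L^2(\Omega)$, we may identify
\begin{align*}
U^{\gamma} \;=\; -\lambda_{tot}(S^{\gamma})\,\partial_x p^{\gamma}
\qquad\text{and}\qquad
\gamma^2 Q^{\gamma} \;=\; -\lambda_{tot}(S^{\gamma})\,\partial_z p^{\gamma}
\end{align*}
almost everywhere in $\Omega$, for almost every $t\in(0,T)$.

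Squaring both identities and using the upper bound $\lambda_{tot}\le M$ from Assumption \ref{ass:Bexistence}(6) yields, after integration over $\Omega$, the two pointwise-in-time inequalities $\|U^{\gamma}(t)\|_{L^2(\Omega)}^2 \le M^2\,\|\partial_x p^{\gamma}(t)\|_{L^2(\Omega)}^2$ and $\gamma^4\,\|Q^{\gamma}(t)\|_{L^2(\Omega)}^2 \le M^2\,\|\partial_z p^{\gamma}(t)\|_{L^2(\Omega)}^2$. From Lemma \ref{lem:apriori2} we deduce separately
\begin{align*}
\|\partial_x p^{\gamma}(t)\|_{L^2(\Omega)}^2 \le \frac{2cM^2}{a^2}\,\|q(t)\|_{L^2(\partial\Omega)}^2,
\qquad
\|\partial_z p^{\gamma}(t)\|_{L^2(\Omega)}^2 \le \frac{2cM^2\gamma^2}{a^2(1-\gamma^2)}\,\|q(t)\|_{L^2(\partial\Omega)}^2,
\end{align*}
by throwing away the second non-negative term on the left-hand side of the estimate in Lemma \ref{lem:apriori2} and dividing through by $\gamma^2$ and $(1-\gamma^2)$ respectively.

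Finally, I would combine these bounds, integrate over $(0,T)$ to convert $L^2(\partial\Omega)$ into $L^2(\partial\Omega_T)$, and take square roots. With $C:=(2c)^{1/2}M^2/a$ this gives the $U^{\gamma}$ estimate immediately and, for $Q^{\gamma}$, the bound
\begin{align*}
\|Q^{\gamma}\|_{L^2(\Omega_T)}^2 \;\le\; \frac{M^2}{\gamma^4}\,\|\partial_z p^{\gamma}\|_{L^2(\Omega_T)}^2 \;\le\; \frac{2cM^4}{a^2\,\gamma^2(1-\gamma^2)}\,\|q\|_{L^2(\partial\Omega_T)}^2,
\end{align*}
which, upon taking square roots, gives the stated form with the $(1-\gamma^2)^{-1}$ factor (modulo an extra $\gamma^{-1}$ that is absorbed into $C$ by the reader's convention, or reflecting the fact that the corollary's $\|q\|^2$ is presumably meant as the squared norm on both lines).

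The argument is almost entirely mechanical once Lemma \ref{lem:apriori2} is in hand; the only point requiring a moment of care is the $\gamma^{-2}$ prefactor one picks up when isolating $Q^{\gamma}$ from $\gamma^2 Q^{\gamma} = -\lambda_{tot}(S^{\gamma})\partial_z p^{\gamma}$, which combines multiplicatively with the $(1-\gamma^2)^{-1}$ factor coming from the estimate on $\partial_z p^{\gamma}$. No new test-function choices, no density arguments, and no further use of the saturation equation are needed, so no serious obstacle is expected.
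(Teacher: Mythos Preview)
Your approach is essentially identical to the paper's: identify $U^\gamma=-\lambda_{tot}(S^\gamma)\partial_x p^\gamma$ and $\gamma^2 Q^\gamma=-\lambda_{tot}(S^\gamma)\partial_z p^\gamma$ pointwise from \eqref{eq:weakU}--\eqref{eq:weakW}, bound $\lambda_{tot}\le M$, and feed in Lemma~\ref{lem:apriori2}. Your observation about the extra $\gamma^{-1}$ in the $Q^\gamma$ bound is correct and reflects a genuine imprecision in the corollary's stated constant that the paper's own proof glosses over through loose bookkeeping of norms.
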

\begin{proof}
 The definition of $U^{\gamma}$ and Lemma \ref{lem:apriori2} implies that
 \begin{align*}
  \|U^{\gamma}\|_{L^2(\Omega_T)}= \int_0^T\int_{\Omega} |\lambda_{tot}(S^{\gamma})\partial_x p^{\gamma}|
  \leq  M^2\|\partial_x p^{\gamma}\|_{L^2(\Omega_T)}\leq C\Vert q \Vert^2_{L^2(\partial \Omega_T)},
 \end{align*}
where $C= \tfrac{c M^4}{a^2}$. Similarly, the component $Q^{\gamma}$ satisfies
 \begin{align*}
  \gamma^2\|Q^{\gamma}\|_{L^2(\Omega_T)}= \int_0^T\int_{\Omega} |\lambda_{tot}(S^{\gamma})\partial_z p^{\gamma}|&
  \leq  M^2\|\partial_z p^{\gamma}\|_{L^2(\Omega_T)}\\&\leq \frac{c M^4\gamma^2}{a^2(1-\gamma^2)}\Vert q \Vert^2_{L^2(\partial \Omega_T)}.
 \end{align*}
Hence, we have
 \begin{align*}
  \|Q^{\gamma}\|_{L^2(\Omega_T)}\leq \frac{C}{1-\gamma^2}\Vert q \Vert^2_{L^2(\partial \Omega_T)}.
 \end{align*}
\end{proof}

In the following lemma we prove an estimate on the time-partial derivative of the weak solution $S^{\gamma}$ and its derivative $\partial_x S^{\gamma}$.

\begin{lemma}
Let $\{(S^{\gamma},p^{\gamma},U^{\gamma},Q^{\gamma})\}_{\gamma>0}$ be a sequence of weak solutions for the BTP-model \eqref{eq:BTP}. If Assumption \ref{ass:Bexistence} holds, then there exists a constant $C>0$, independent of the parameters $\gamma$ and $\mu_e$, such that the sequence $\{S^{\gamma}\}_{\gamma>0}$ satisfies the estimate
\begin{align*}
 \Vert\partial_t S^{\gamma}\Vert_{L^2(\Omega_T)} + \frac{3\beta_1}{4} \Vert \partial_{tx}  S^{\gamma}\Vert_{L^2(\Omega_T)}&+ \frac{3\beta_2}{4} \Vert  \partial_{tz} S^{\gamma}\Vert_{L^2(\Omega_T)} \\&\leq \frac{M^2}{\mu_e}\Big(C+\frac{C}{1-\gamma^2} \Big)\Vert q \Vert^2_{L^2(\partial \Omega_T)}.
\end{align*}
\label{lem:apriori3}
\end{lemma}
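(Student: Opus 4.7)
The plan is to test the weak formulation \eqref{eq:weakmodel} with $\phi=\partial_{t}S^{\gamma}$, which is formally admissible because $S^{\gamma}\in H^{1}(0,T;H^{1}(\Omega))$ (a standard density/Galerkin approximation argument justifies this choice rigorously). The three diagonal terms produced by this substitution are exactly the quantities appearing on the left-hand side of the claimed inequality: $\int_{\Omega_T}(\partial_t S^{\gamma})^2$ from the accumulation term, and $\beta_1\int_{\Omega_T}(\partial_{tx}S^{\gamma})^2$ and $\beta_2\int_{\Omega_T}(\partial_{tz}S^{\gamma})^2$ from the two Brinkman-type regularization terms. These are sign-definite and stay on the left-hand side.

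Next, I would bound the two convection cross-terms by Young's inequality in the form $|ab|\le\tfrac{1}{2\epsilon}a^{2}+\tfrac{\epsilon}{2}b^{2}$, applied with $\epsilon=\beta_1/2$ to $\int_{\Omega_T}f(S^{\gamma})U^{\gamma}\,\partial_{tx}S^{\gamma}$ and with $\epsilon=\beta_2/2$ to $\int_{\Omega_T}f(S^{\gamma})Q^{\gamma}\,\partial_{tz}S^{\gamma}$. Combined with the bound $f\le M$ from Assumption \ref{ass:Bexistence}(5), this produces
\begin{align*}
\Bigl|\int_{\Omega_T}\!\!f(S^{\gamma})U^{\gamma}\partial_{tx}S^{\gamma}\Bigr|\le \frac{M^{2}}{\beta_1}\|U^{\gamma}\|^{2}_{L^{2}(\Omega_T)}+\frac{\beta_1}{4}\|\partial_{tx}S^{\gamma}\|^{2}_{L^{2}(\Omega_T)},
\end{align*}
and analogously for the $Q^{\gamma}$ term with $\beta_2$. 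The two quarter-$\beta_i$ pieces get absorbed into the left-hand side, leaving precisely the $\tfrac{3\beta_1}{4}$ and $\tfrac{3\beta_2}{4}$ coefficients in the statement.

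For the inflow boundary term $\int_0^T\int_{\partial\Omega_{\text{inflow}}}f(S^{\gamma}_{\text{inflow}})U^{\gamma}_{\text{inflow}}\,\partial_t S^{\gamma}(0,z,t)\,dz\,dt$, the key observation is that the Dirichlet datum $S_{\text{inflow}}=S_{\text{inflow}}(z)$ in \eqref{eq:IBC-BTP} is time-independent, so $\partial_t S^{\gamma}=0$ in the trace sense on $\partial\Omega_{\text{inflow}}\times(0,T)$ and this boundary contribution vanishes (on the impermeable and outflow portions of the boundary the trace of $S^{\gamma}$ is also zero by \eqref{eq:IBC-BTP}, so no other boundary terms appear). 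It then remains to control $\|U^{\gamma}\|_{L^{2}(\Omega_T)}$ and $\|Q^{\gamma}\|_{L^{2}(\Omega_T)}$ by means of Corollary \ref{cor:Vestimate}, which yields the $\|q\|^{2}_{L^{2}(\partial\Omega_T)}$ dependence. Finally, since $\beta_1=\mu_e/L^{2}$ and $\beta_2=\mu_e/H^{2}$, the prefactors $M^{2}/\beta_i$ collapse to $M^{2}/\mu_e$ multiplied by an $O(1)$ geometric constant, with the $1/(1-\gamma^{2})$ factor inherited from the $Q^{\gamma}$ bound.

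The main obstacle I anticipate is the rigorous justification of the test function $\phi=\partial_t S^{\gamma}$: it is only in $L^{2}(0,T;H^{1}(\Omega))$, whereas Definition \ref{def:BTP-weaksolution} requires test functions in $L^{2}(0,T;C^{0}(\Omega))$. This should be handled by a mollification or Galerkin argument, performing the computation at the finite-dimensional level where all derivatives exist classically and then passing to the limit using the lower semicontinuity of the norms on the left-hand side and the weak convergences already available from Lemma \ref{lem:apriori1} and Lemma \ref{lem:apriori2}.
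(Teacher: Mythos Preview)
Your proposal is correct and follows essentially the same route as the paper's proof: test \eqref{eq:weakmodel} with $\phi=\partial_t S^{\gamma}$, observe that the inflow boundary term vanishes because $S_{\text{inflow}}$ is time-independent, apply Cauchy/Young with the parameters $\epsilon=\beta_i/2$ to absorb a quarter of each Brinkman term, and finish with Corollary~\ref{cor:Vestimate}. The only addition in your write-up is the explicit discussion of the admissibility of $\partial_t S^{\gamma}$ as a test function, which the paper takes for granted.
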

\begin{proof}
 We consider the weak formulation \eqref{eq:weakmodel} in Definition \ref{def:BTP-weaksolution} with the test function $\phi=\partial_t S^{\gamma}$. Then, using Cauchy's inequality we obtain
 \begin{align*}
  &\int_{0}^{T}\int_{\Omega} \big(\partial_{t}S^{\gamma}\big)^2+\beta_1( \partial_{tx}S^{\gamma})^2 + \beta_2(\partial_{tz}S^{\gamma})^2\,dx\,dz\,dt \\= &\int_{0}^{T}\int_{\Omega}  f(S^{\gamma}) \big( U^{\gamma}\partial_{xt}S^{\gamma} + Q^{\gamma}\partial_{zt}S^{\gamma} \big) \,dx\,dz\,dt,\\
  \leq & ~\frac{1}{\beta_1}\|f(S^{\gamma})U^{\gamma}\|^2_{L^2(\Omega)}+ \frac{\beta_1}{4}\|\partial_{tx}S^{\gamma}\|^2_{L^2(\Omega)}+\frac{1}{\beta_2}\|f(S^{\gamma})Q^{\gamma}\|^2_{L^2(\Omega)}+ \frac{\beta_2}{4}\|\partial_{tz}S^{\gamma}\|^2_{L^2(\Omega)}.
  \end{align*}
  Note that the term on the inflow boundary vanished as a result of the time-independent choice for the inflow saturation $S_{\text{inflow}}$. This reduces to 
  \begin{align}
   \Vert\partial_t S^{\gamma}\Vert_{L^2(\Omega_T)}& + \frac{3\beta_1}{4} \Vert  \partial_{tx} S^{\gamma}\Vert_{L^2(\Omega_T)}+ \frac{3\beta_2}{4} \Vert  \partial_{tz} S^{\gamma}\Vert_{L^2(\Omega_T)}\nonumber \\&\leq \frac{1}{\beta_1}\|f(S^{\gamma})U^{\gamma}\|^2_{L^2(\Omega)}+\frac{1}{\beta_2}\|f(S^{\gamma})Q^{\gamma}\|^2_{L^2(\Omega)} .
   \label{eq:S-tz-estimate}
  \end{align}
Now, using Corollary \ref{cor:Vestimate}, we obtain
\begin{align*}
   \Vert\partial_t S^{\gamma}\Vert_{L^2(\Omega_T)}& + \frac{3\beta_1}{4} \Vert  \partial_{tx} S^{\gamma}\Vert_{L^2(\Omega_T)}+ \frac{3\beta_2}{4} \Vert  \partial_{tz} S^{\gamma}\Vert_{L^2(\Omega_T)} \\&\leq \frac{M^2}{\beta}\Big(C+\frac{C}{1-\gamma^2} \Big)\Vert q \Vert^2_{L^2(\partial \Omega_T)},
  \end{align*}
   where $\beta=\min\{\beta_1,\,\beta_2\}$ and $C>0$ is a constant defined as in Corollary \ref{cor:Vestimate}.
\end{proof}

\section{Convergence Analysis}
\label{sec:convergence}
In this section we prove the analytical convergence of the sequence of weak solutions $\{(S^{\gamma},p^{\gamma},U^{\gamma},Q^{\gamma})\}_{\gamma>0}$ for the BTP-model \eqref{eq:BTP} to a weak solution of the BVE-model \eqref{eq:BVE}, \eqref{eq:velocity} as the geometrical parameter $\gamma$ tends to $0$. The main result of paper is summarized in this theorem.

\begin{theorem}
Let $\{(S^{\gamma},p^{\gamma},U^{\gamma},Q^{\gamma})\}_{\gamma>0}$ be a sequence of weak solutions for the BTP-model \eqref{eq:BTP} with the initial and boundary conditions \eqref{eq:IBC-BTP}. If Assumption \ref{ass:Bexistence} holds, then there exists a subsequence of the weak solutions $\{S^{\gamma}, p^{\gamma},U^{\gamma},Q^{\gamma}\}_{\gamma>0}$, denoted in the same way, and functions $S\in H^1(0,T;H^1(\Omega))$, $p\in L^2((0,T);H^1(0,1))$, $U\in L^2(\Omega_T)$ and $ Q\in L^2(\Omega_T)$ such that
 \begin{align*}
 \begin{array}{rll}
  S^{\gamma}&\rightarrow S &\quad\quad \text{ in } L^2(\Omega_T),\\
  \nabla S^{\gamma}&\rightharpoonup \nabla S &\quad\quad \text{ in } H^1(0,T;L^2(\Omega)),\\
   p^{\gamma} &\rightharpoonup p &\quad\quad \text{ in } L^2(0,T;H^1(\Omega)),\\
  U^{\gamma} &\rightharpoonup U&\quad\quad \text{ in } L^2(\Omega_T),\\
  Q^{\gamma} &\rightharpoonup Q &\quad\quad \text{ in } L^2(\Omega_T)
   \end{array}
 \end{align*}
as $\gamma$ tends to zero. Further, the limit pressure $p$ is independent of the $z$ coordinate and satisfies $\partial _x p=-\tfrac{\hat{U}_{\text{inflow}}}{\int_0^1\lambda_{tot}(S)\,dz}$. The functions $S,U,Q$ satisfy the equations \eqref{eq:weaksat-BVE}, \eqref{eq:weakU-BVE} and \eqref{eq:weakW-BVE} in Definition \ref{def:BVE-weaksolution}, respectively.
\label{thm:MainThm}
\end{theorem}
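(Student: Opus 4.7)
The plan is to combine the a priori estimates of Section \ref{sec:a priori} with standard weak and strong compactness arguments so as to pass to the limit $\gamma \to 0$ in every equation of Definition \ref{def:BTP-weaksolution}. First, I would collect weakly convergent subsequences: Lemmas \ref{lem:apriori1} and \ref{lem:apriori3} together bound $\{S^{\gamma}\}$ in $L^\infty(0,T;H^1(\Omega))\cap H^1(0,T;H^1(\Omega))$, and Corollary \ref{cor:Vestimate} bounds $\{U^\gamma\},\{Q^\gamma\}$ in $L^2(\Omega_T)$ (uniformly for small $\gamma$, since $1/(1-\gamma^2)$ stays bounded). Lemma \ref{lem:apriori2} gives $\|\partial_x p^\gamma\|_{L^2(\Omega)}\le c M^2/a^2\,\|q\|_{L^2(\partial\Omega)}^2$ and, crucially, $\|\partial_z p^\gamma\|_{L^2(\Omega)}^2 \le \frac{2cM^2\gamma^2}{a^2(1-\gamma^2)}\|q\|_{L^2(\partial\Omega)}^2 \to 0$. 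Together with the zero-mean condition in \eqref{eq:IBC-BTP} and Poincaré's inequality, this places $\{p^\gamma\}$ in a bounded set of $L^2(0,T;H^1(\Omega))$. Banach--Alaoglu yields the five convergences listed in the theorem, and the vanishing of $\partial_z p^\gamma$ in $L^2$ forces $\partial_z p = 0$ a.e., so $p=p(x,t)\in L^2(0,T;H^1(0,1))$.

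Next I would upgrade the convergence of $S^\gamma$ to strong convergence. Because $\{S^\gamma\}$ is bounded in $L^2(0,T;H^1(\Omega))$ and $\{\partial_t S^\gamma\}$ is bounded in $L^2(\Omega_T)$, the Aubin--Lions lemma (using $H^1(\Omega)\hookrightarrow\hookrightarrow L^2(\Omega)\hookrightarrow L^2(\Omega)$) gives $S^\gamma\to S$ strongly in $L^2(\Omega_T)$, and, up to a further subsequence, pointwise a.e. Lipschitz continuity of $f$ and $\lambda_{tot}$ (Assumption \ref{ass:Bexistence}(5)--(6)) then yields $f(S^\gamma)\to f(S)$ and $\lambda_{tot}(S^\gamma)\to \lambda_{tot}(S)$ strongly in $L^2(\Omega_T)$ and (by dominated convergence) in every $L^r$, $r<\infty$. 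Combined with the weak convergence of $U^\gamma,Q^\gamma$ this produces the products $f(S^\gamma)U^\gamma\rightharpoonup f(S)U$ and $f(S^\gamma)Q^\gamma\rightharpoonup f(S)Q$ (strong $\times$ weak in $L^2$). Passing to the limit in \eqref{eq:weakmodel} and \eqref{eq:weakincomm} then gives \eqref{eq:weaksat-BVE} and \eqref{eq:weakincomp-limit}. The initial condition $S(\cdot,\cdot,0)=S^0$ is preserved by the continuous embedding $H^1(0,T;H^1(\Omega))\hookrightarrow C([0,T];H^1(\Omega))$ from Remark \ref{rem:time-continuity}.

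The last, and main, task is to identify $U$ explicitly and thereby establish \eqref{eq:weakU-BVE}--\eqref{eq:weakW-BVE}. Passing to the limit in \eqref{eq:weakU} gives $U=-\lambda_{tot}(S)\,\partial_x p$ (here the strong convergence of $\lambda_{tot}(S^\gamma)$ together with the weak convergence of $\partial_x p^\gamma$ is used). Since $p$ is $z$-independent, integrating over $z\in(0,1)$ produces
\begin{equation*}
\int_0^1 U(x,z,t)\,dz \;=\; -\partial_x p(x,t)\int_0^1 \lambda_{tot}(S(x,z,t))\,dz.
\end{equation*}
On the other hand, testing \eqref{eq:weakincomp-limit} with functions of the form $\theta(x,z)=\eta(x)$ (using $Q=0$ on $\partial\Omega_{\text{imp}}$, which is inherited from the boundary condition in \eqref{eq:IBC-BTP}) shows that $x\mapsto \int_0^1 U\,dz$ is constant in $x$ and equal to $\hat{U}_{\text{inflow}}(t)$. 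Solving for $\partial_x p$ yields the asserted formula
\begin{equation*}
\partial_x p \;=\; -\frac{\hat{U}_{\text{inflow}}}{\int_0^1 \lambda_{tot}(S)\,dz},
\end{equation*}
and substituting back gives $U=\hat{U}_{\text{inflow}}\,\lambda_{tot}(S)/\int_0^1\lambda_{tot}(S)\,dz$, i.e.\ \eqref{eq:weakU-BVE}. Finally, \eqref{eq:weakW-BVE} is recovered by integrating the limit incompressibility $\partial_x U+\partial_z Q=0$ in $z$ from $0$ to $z$ and using $Q|_{z=0}=0$.

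The main obstacle I anticipate is the identification step in the last paragraph: the weak convergence $\partial_x p^\gamma\rightharpoonup \partial_x p$ alone does not automatically allow us to pass to the limit in the product $\lambda_{tot}(S^\gamma)\partial_x p^\gamma$ unless the strong convergence of $\lambda_{tot}(S^\gamma)$ is available, and then one must carefully extract enough structure from \eqref{eq:weakincomm-gamma}, \eqref{eq:weakincomm} and the boundary conditions on $\partial\Omega_{\text{imp}}$ to justify both the $x$-independence of $\int_0^1 U\,dz$ and its identification with $\hat{U}_{\text{inflow}}$. Secondary care is needed to ensure that the bounds from Corollary \ref{cor:Vestimate} and Lemma \ref{lem:apriori3} remain $\gamma$-uniform, which is why we restrict to $\gamma$ bounded away from $1$ (e.g.\ $\gamma\le 1/2$), a harmless restriction in the limit $\gamma\to 0$.
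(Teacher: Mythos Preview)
Your proposal is correct and follows essentially the same route as the paper: the same a priori estimates feed the same compactness arguments (the paper invokes Rellich--Kondrachov where you invoke Aubin--Lions, which is equivalent here), the strong convergence of $\lambda_{tot}(S^\gamma)$ combined with the weak convergence of $\partial_x p^\gamma$ identifies $U=-\lambda_{tot}(S)\partial_x p$, and the pressure formula is extracted by testing the limit incompressibility relation with $z$-independent test functions. Two minor remarks: when you test \eqref{eq:weakincomp-limit} with $\theta=\eta(x)$ the $Q$-term disappears simply because $\partial_z\theta=0$, so you do not need to invoke $Q=0$ on $\partial\Omega_{\text{imp}}$ at that point; and for the identification of $Q$ the paper carries out at the weak level (via an integration-by-parts claim) what you describe formally as ``integrate $\partial_xU+\partial_zQ=0$ in $z$'', so you should be prepared to phrase that step in terms of suitable test functions as well.
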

\begin{proof}
 The estimate in Lemma \ref{lem:apriori1} implies the existence of a weakly convergent subsequence of $\{ S^{\gamma}\}_{\gamma>0}$, denoted in the same way, and a function $S\in L^2(0,T;H^1(\Omega))$ with
\begin{align}
\begin{array}{cll}
 S^{\gamma} &\rightharpoonup S &\quad\text{ in } L^2(0,T;H^1(\Omega)),
\label{eq:S-weak}
\end{array}
\end{align}
as $\gamma\rightarrow 0$. In addition, the estimate in Lemma \ref{lem:apriori3} implies
\begin{align}
\begin{array}{cll}
 \nabla S^{\gamma} &\rightharpoonup \nabla S &\quad\text{ in } H^1(0,T;L^2(\Omega)),
\label{eq:Sx-weak}
\end{array}
\end{align}
as $\gamma\rightarrow 0$. The Rellich-Kondrachov compactness theorem and the boundedness of the domain imply the embedding $H^1(0,T; H^1(\Omega)) \Subset L^2(\Omega_T)$. Thus, the weak convergence results \eqref{eq:S-weak} and \eqref{eq:Sx-weak} lead to the strong convergence
\begin{align}
 S^{\gamma} \rightarrow S \in L^2(\Omega_T).
 \label{eq:S-strong}
\end{align}
This strong convergence and the a priori estimate from Lemma \ref{lem:apriori1} imply that the limit $S$ also satisfies
\begin{align}
 S,\,\nabla S \in L^{\infty}(0,T;H^1(\Omega)).
\end{align}
Moreover, we have
\begin{align}
 S\in C([0,T];H^1(\Omega)).
 \label{eq:S-timecontinuous}
\end{align}
The strong convergence result in \eqref{eq:S-strong} and the Lipschitz continuity of $f$ and $\lambda_{tot}$ imply
\begin{align}
\begin{array}{cl}
 f(S^{\gamma})\rightarrow f(S) &\text{ in } L^2(\Omega_T),\\
 \lambda_{tot}(S^{\gamma})\rightarrow \lambda_{tot}(S)&\text{ in } L^2(\Omega_T).
\end{array} 
\label{eq:strongconv-f-lambda}
\end{align}

Now, we consider the estimate in Lemma \ref{lem:apriori2} and let $\gamma\rightarrow 0$. Then, we have
\begin{align}
 \Vert \partial_z p^{\gamma}\Vert^2_{L^2(\Omega)}\rightarrow 0,
\end{align}
as $\gamma\rightarrow 0$. This, consequently, leads to the uniform estimate
\begin{align*}
   \Vert \partial_x p^{\gamma}\Vert^2_{L^2(\Omega)}\leq \frac{c M^2 }{a^2} \Vert q \Vert^2_{L^2(\partial \Omega)}.
 \end{align*}
Hence, there exists a weakly convergent subsequence of $\{ p^{\gamma}\}_{\gamma>0}$, denoted in the same way, and a $z$-independent function $p=p(x)$ with $p\in L^2(0,T; H^1((0,1)))$ such that
\begin{align}
 p^{\gamma} \rightharpoonup p \quad\text{ in } L^2(0,T; H^1(\Omega)).
 \label{eq:limitpressure}
\end{align}
This convergence result corresponds to the vertical equilibrium assumption for almost horizontal flows in thin domains.

The strong convergence of $\lambda_{tot}$ in \eqref{eq:strongconv-f-lambda} and the weak convergence of $p$ in \eqref{eq:limitpressure} imply the weak convergence of $U^{\gamma}=\lambda_{tot}(S^{\gamma})\partial_x p^{\gamma}$ to the limit $U=\lambda_{tot}(S)\partial_x p$ such that
\begin{align}
 U^{\gamma} \rightharpoonup U=\lambda_{tot}(S)\partial_x p \quad\text{ in } L^2(\Omega_T).
\end{align}

Corollary \ref{cor:Vestimate} implies the boundedness of $Q^{\gamma}$ in $L^2(\Omega_T)$. Hence, up to a subsequence, there exists a function $Q\in  L^2(\Omega_T) $ such that
\begin{align}
\int_0^T \int_{\Omega} Q^{\gamma } \phi \,dx\,dz\,dt\rightarrow \int_0^T \int_{\Omega} Q \, \phi \,dx\,dz\,dt,
\label{eq:B-Qweak1}
\end{align}
for any test function $\phi\in L^2(\Omega_T)$. We also have the weak convergence of the products
\begin{align}
\begin{array}{clc}
 f(S^\gamma)U^{\gamma}&\rightharpoonup f(S)U &\text{ in } L^2(\Omega_T),\\
 f(S^\gamma)Q^{\gamma}&\rightharpoonup f(S)Q &\text{ in } L^2(\Omega_T).
\end{array}
\label{eq:conv-product}
\end{align}
All above convergence results imply that equation \eqref{eq:weakmodel} in Definition \ref{def:BTP-weaksolution} converge to
\begin{align}
   \int_{0}^{T}\int_{\Omega} \partial_{t}S \phi-& f(S) U\partial_x \phi - f(S)Q\partial_z\phi +\beta_1 \partial_{tx}S \partial_x\phi +\beta_2 \partial_{tz}S \partial_z\phi \,dx\,dz\,dt\nonumber\\&=\int_0^T\int_{\partial\Omega_{\text{inflow}}}f(S_{\text{inflow}})U_{\text{inflow}}\phi(0,z,t)\,dz\,dt,
    \label{eq:weakmodellimit}
 \end{align}
 for any $\phi\in L^2(0,T;C^0(\Omega))$. Further, the velocity component $U$ satisfies
  \begin{align}
  \int_{\Omega} U \psi \,dx\,dz = -\int_{\Omega} \lambda_{tot}(S)\partial_x p \psi \,dx\,dz,
  \label{eq:weakUlimit}
 \end{align}
 for any $\psi\in L^2(\Omega)$ almost everywhere in $(0,T)$. Also the limit $Q$ satisfies
 \begin{align}
\int_{\Omega} U \partial_x \phi \,dx\,dz + \int_{\Omega} Q\partial_z\phi \,dx\,dz= -  \int_{\partial \Omega_{\text{inflow}}} U_{\text{inflow}}\phi(0,\cdot) \,dz,
 \label{eq:weakincommlimit}
\end{align}
 for any test function $\phi\in C^0(\Omega)$ almost everywhere in $(0,T)$.

In the following, we evaluate the limit pressure $p$ using the limit saturation $S$ and the velocity at the inflow boundary $U_{\text{inflow}}$. This consequently leads to limit velocity operators $U$ and $Q$ that depend on $S$ and $U_{\text{inflow}}$ only. So, we consider equation \eqref{eq:weakincommlimit} with a test function $\phi=\phi(x)$ that satisfies $\phi\in C^0((0,1))$, $\phi(1)=0$, $\phi(0)=1$ and $\int_0^1\phi'\,dx=-1$. Then, equation \eqref{eq:weakincommlimit} reduces to
 \begin{align}
\int_{\Omega} U \phi' \,dx\,dz = -  \int_{\partial \Omega_{\text{inflow}}} U_{\text{inflow}} \,dz.
\label{eq:reduction1}
\end{align}
We also define the vertically-averaged operator
\begin{align}
 \hat U(x,t)=\int_0^1 U(x,z,t)\,dz ,
\end{align}
for almost all $x\in(0,1)$ and $t\in(0,T)$ and choose the test function $\psi=\phi'$. Then, equation \eqref{eq:reduction1} is reformulated as
 \begin{align}
  \int_0^1 \hat U \phi'\,dx=-\hat U_{\text{inflow}}.
  \label{eq:result1}
 \end{align}
Similarly equation \eqref{eq:weakUlimit} with the $z$-independent test function $\psi= \phi'\in L^2((0,1))$ reduces to
 \begin{align}
  \int_{0}^1 \hat U \phi' \,dx = -\int_{0}^1 \partial_x p \hat{\lambda}_{tot}(S)\phi' \,dx,
  \label{eq:reduction2}
 \end{align}
where $\hat{\lambda}_{tot}\coloneqq \int_0^1\lambda_{tot}(S)\,dz$ is the vertically-averaged total mobility. Substituting equation \eqref{eq:result1} into \eqref{eq:reduction2} yields
 \begin{align}
  \hat U_{\text{inflow}}= \int_{0}^1 \partial_x p \hat{\lambda}_{tot}(S)\phi' \,dx,
 \end{align}
 As the limit pressure $p=p(x,t)$ and the vertically averaged mobility $\hat{\lambda}_{tot}(S)$ are independent of the $z$-coordinate, then using $\int_0^1\phi'\,dx=-1$ we obtain 
\begin{align} 
 \partial_x p = -\frac{\hat U_{\text{inflow}}}{\hat{\lambda}_{tot}(S)}.
 \label{eq:pressure-saturation}
\end{align}
Substituting this formula into \eqref{eq:weakUlimit} allows reformulating the horizontal velocity $U$ component as
\begin{align}
 \int_{\Omega}U \psi\,dx\,dz = \int_{\Omega} \frac{\hat U_{\text{inflow}} \lambda_{tot}(S)}{\hat{\lambda}_{tot}(S)} \psi\,dx\,dz,
 \label{eq:limit-U}
\end{align}
for any $\psi\in L^2(\Omega)$ and almost everywhere in $(0,T)$. 

The last step in the proof is to evaluate the limit velocity $Q$. For this, it is necessary first to prove the claim
\begin{align}
 \int_{\Omega} U[S] \partial_x \phi \,dx\,dz = -\int_{\Omega} \int_0^z U[.,r;S]\,dr\partial_{xz}\phi \,dx\,dz,
 \label{eq:claim1}
\end{align}
for any test function $\phi\in H^1(\Omega)$. The proof starts with applying Gauss' theorem to the right side of the equation above together with equation \eqref{eq:U-imp} in Remark \ref{rem:U-boundary}. Then, we have
\begin{align}
 \int_{\Omega} \int_0^z U[.,r;S]\,dr\partial_{xz}\phi \,dx\,dz = -\int_{\Omega} \partial_z \int_0^z U[.,r;S]\,dr\partial_{x}\phi \,dx\,dz.
\end{align}
Using summation by parts, it holds that
\begin{align*}
 &\int_{\Omega} \dfrac{\int_0^z U[S(x,r,t)]\,dr-\int_{-\Delta z}^{z-\Delta z}U[S(x,r,t)]\,dr}{\Delta z}\,\partial_{x}\phi\,dx\,dz\,dt\\ =& \int_{\Omega} \frac{1}{\Delta z}\int_{z-\Delta z}^{z}U[S(x,r,t)]\,dr\,\partial_{x}\phi\,dx\,dz- \int_{\Omega} \frac{1}{\Delta z}\int_{-\Delta z}^{0}U[S(x,r,t)]\,dr\,\partial_{x}\phi\,dx\,dz.
\end{align*}
Letting $\Delta z\rightarrow 0$ and using Lebesgue's Differentiation theorem \cite{Evans} together with equation\eqref{eq:U-imp}, we obtain
\begin{align}
 \int_{\Omega}\partial_z\int_0^z U[S(x,r,t)]\,dr\,\partial_{x}\phi\,dx\,dz\,dt=\int_{\Omega} U[S(x,z,t)]\,\partial_{x}\phi\,dx\,dz,
\label{eq:B-weakincomp2}
\end{align}
for almost all $z\in(0,1)$, which proves the claim. Thus, substituting \eqref{eq:claim1} into the weak incompressibility relation \eqref{eq:weakincommlimit} yields 
\begin{align*}
\int_{\Omega} Q\partial_z\phi\,dx\,dz =\int_{\Omega} \int_0^z U[S(.,r,.)]\,dr\partial_{xz}\phi \,dx\,dz-\int_0^1 U_{\text{inflow}}(r,.)\phi(0,r)\,dr,
\end{align*}
for any test function $\phi\in C^0(\Omega)$ with $\phi(1,z)=0$ for almost all $z\in (0,1)$. We apply again Gauss' theorem to the first term on the right side the equation above and use the choice $\phi(1,z)=0$. Then we have
\begin{align}
\int_{\Omega} Q\partial_z\phi\,dx\,dz =&-\int_{\Omega} \partial_x \int_0^z U[S(.,r,.)]\,dr\partial_{z} \phi \,dx\,dz - \int_0^1\int_0^z U_{\text{inflow}}(r,\cdot)\,dr\partial_z\phi(0,z)\,dz \nonumber\\&-\int_0^1 U_{\text{inflow}}(r,\cdot)\phi(0,r)\,dr.
\label{eq:star}
\end{align}
Similar to the proof of claim \eqref{eq:claim1}, we can show
\begin{align*}
 \int_0^1\int_0^z U_{\text{inflow}}(r,\cdot)\,dr\partial_z\phi(0,r)\,dr = - \int_0^1 U_{\text{inflow}}(r,\cdot) \phi(0,r)\,dr.
\end{align*}
Thus, equation \eqref{eq:star} reduces to
\begin{align}
\int_{\Omega} Q\partial_z\phi\,dx\,dz =-\int_{\Omega} \partial_x \int_0^z U[S(.,r,.)]\,dr\partial_{z} \phi \,dx\,dz,
  \label{eq:limit-Q}
\end{align}
for any test function $\phi\in H^1(\Omega)$. Hence, the vertical velocity $Q$ satisfies
\begin{align*}
  Q = -\partial_x \int_0^z U[S(.,r,.)]\,dr.
\end{align*}
Equations \eqref{eq:limit-U} and \eqref{eq:limit-Q} show that the limit velocity components $U$ and $Q$ are nonlinear nonlocal operators of the limit saturation $S$ together with the horizontal velocity at the inflow boundary. Consequently, equations \eqref{eq:weakmodellimit}, \eqref{eq:limit-U} and \eqref{eq:limit-Q} imply that the limit $(S,U,Q)$ of the sequence of weak solutions $(S^{\gamma},p^{\gamma},U^{\gamma},Q^{\gamma},)_{\gamma>0}$ for the BTP-model \eqref{eq:BTP} satisfies Definition \ref{def:BVE-weaksolution} and is, therefore, a weak solutions for BVE-model \eqref{eq:BVE}, \eqref{eq:velocity}. 

\end{proof}
\section{Numerical Example}
\label{sec:numerical}
In this section, we present a numerical example that shows the convergence of numerical solutions for the dimensionless BTP-model \eqref{eq:BTP} to numerical solutions for the reduced BVE-model \eqref{eq:BVE}, \eqref{eq:velocity} as the geometrical parameter $\gamma$ reduces. We consider the dimensionless BTP-model \eqref{eq:BTP} with the fractional flow function 
\begin{align}
 f(S)= \frac{MS^2}{MS^2+(1-S)^2},
 \label{eq:diffusion}
\end{align}
where $M$ is the viscosity ratio of the defending phase and the invading phase. The model is also assumed to be satisfied in the domains $\Omega_{\gamma}=(0,L)\times(0,H)$ with decreasing geometrical parameter $\gamma\in\{1,1/5,1/25,1/125\}$, such that the domains' length is fixed $L=5$ and the widths are decreasing $H\in\{5,1,1/5,1/15,1/25\}$.

The initial and boundary conditions are given as
\begin{align}
\begin{array}{rll}
 S^{\gamma}(\cdot,\cdot,0)&=S_{0} &\text{ in } \Omega, \\
 S^{\gamma}&=S_{\text{inflow}} &\text{ on } \{0\}\times (0,1)\times [0,T],\\
 p^{\gamma}&=1 &\text{ on } \{0\}\times (0,1)\times [0,T],\\
 p^{\gamma}&=0 &\text{ on } \{1\}\times (0,1)\times [0,T],\\
 W^{\gamma}&= 0  &\text{ on }(0,1)\times\{0,1\}\times [0,T].
\end{array}
\label{eq:IBC}
\end{align}
In the following examples we choose the initial condition
\begin{align*}
 S_0(x,z)=g(x)S_{\text{inflow}}(z),
\end{align*}
where \begin{align}
 g(x)=\dfrac{(1-x)^2}{10^5x^2+(1-x)^2}\quad\text{ and }\quad S_{\text{inflow}}(z)=\left\{ 
\begin{array}{c l l}
	0 \quad &: & z\leq \frac{3}{10} \text{ and } z>\frac{7}{10},\\
	0.9 \quad &: &\frac{3}{10}<z\leq \frac{7}{10}.
\end{array} \right.
\label{eq:inflow}
 \end{align}

We discretize the dimensionless BTP-model and the nonlocal BVE-model \eqref{eq:BVE}, \eqref{eq:velocity} by applying mass-conservative finite-volume schemes as described in \cite{Armiti-Juber2018}. The schemes are based on Cartesian grids with number of vertical cells $N_z$ significantly less than that in the horizontal direction $N_x$ that fits to the case of thin domains. In the following example, we use a grid of $1000\times 100$ elements, viscosity ratio $M=2$, end time $T=0.3$ and we set $\hat{U}_{\text{inflow}}=1$ in equation \eqref{eq:velocity}.

\begin{figure}
\centering
\subfigure[BTP-model $\gamma=1$]{
\includegraphics[scale=0.18]{./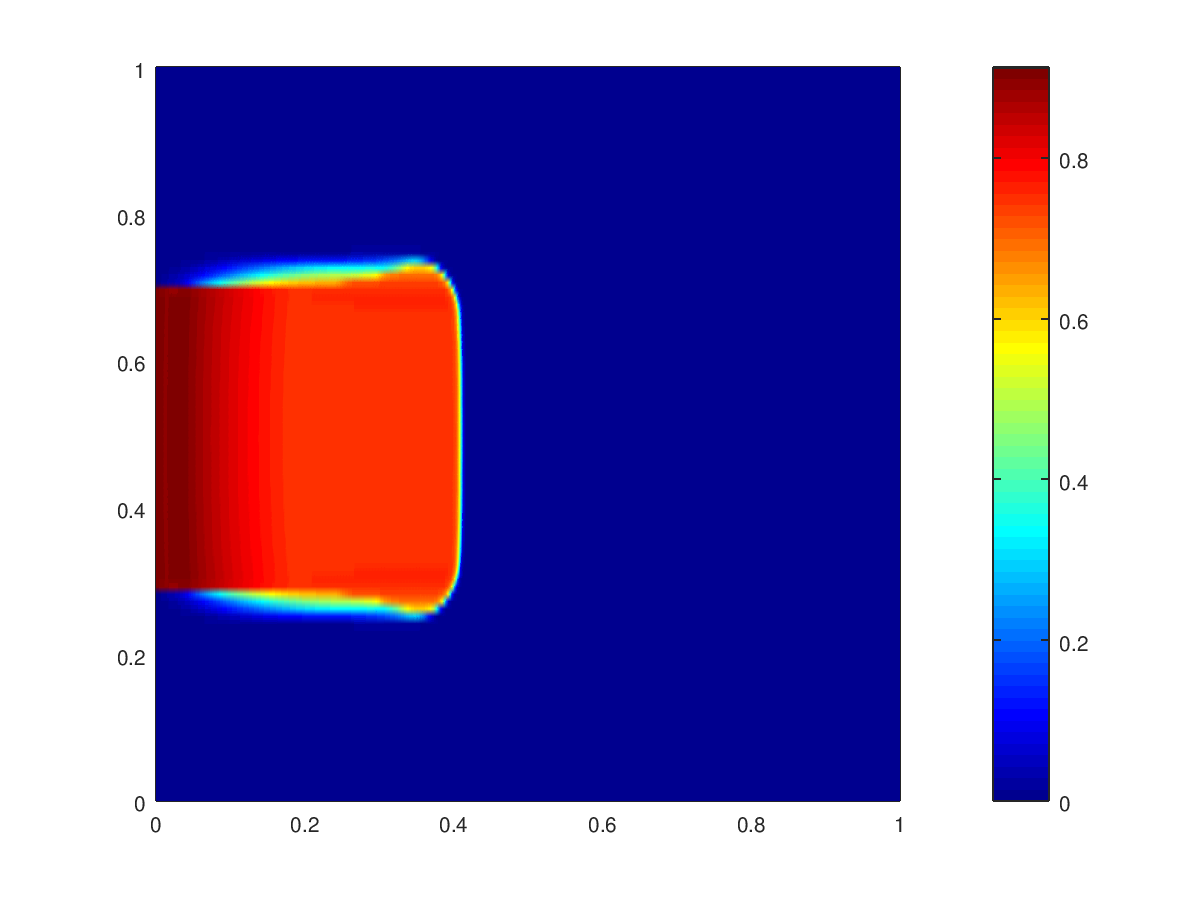}
\label{subfig:beta1}
}\hspace*{-0.1cm}
\subfigure[BTP-model $\gamma=1/5$]{
\includegraphics[scale=0.18]{./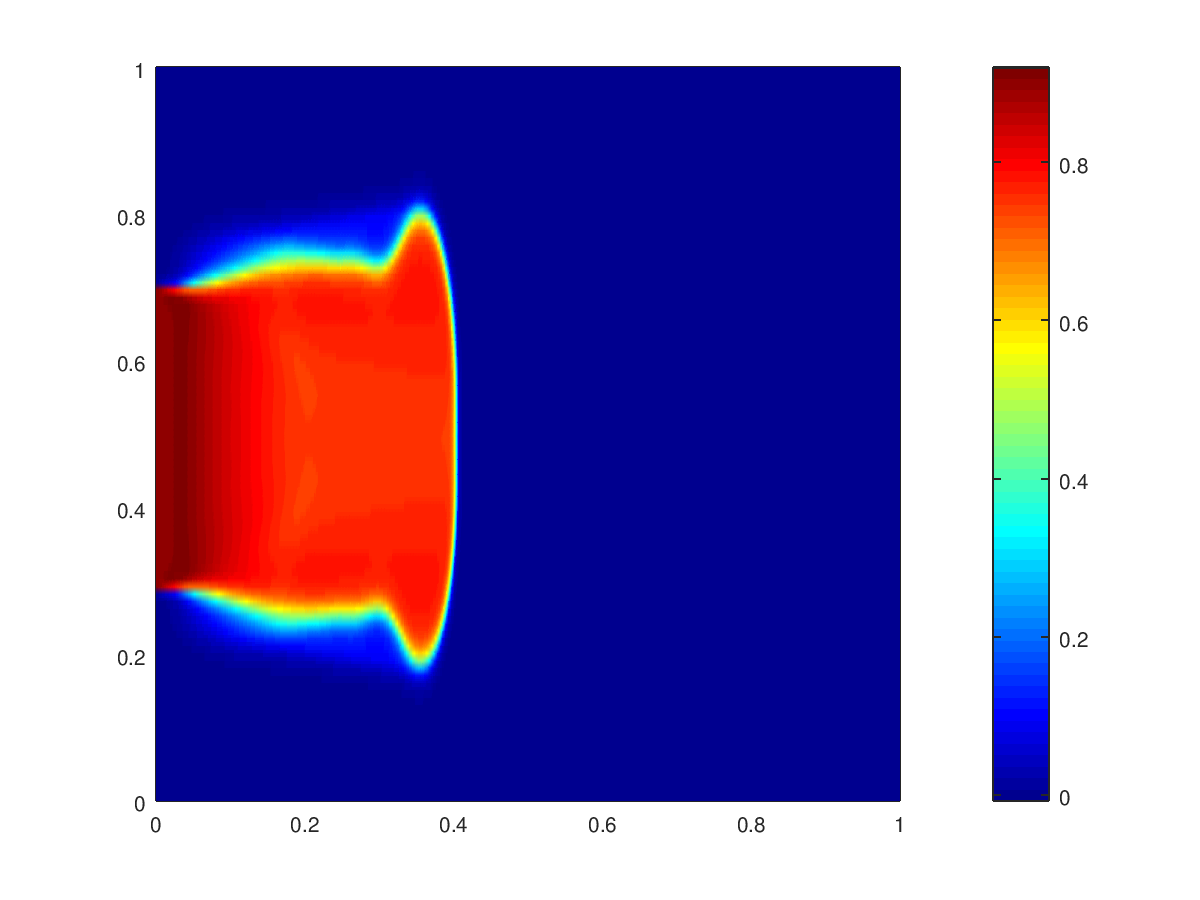}
\label{subfig:beta2}
}\hspace{-0.1cm}
\subfigure[BTP-model $\gamma=1/25$]{
\includegraphics[scale=0.18]{./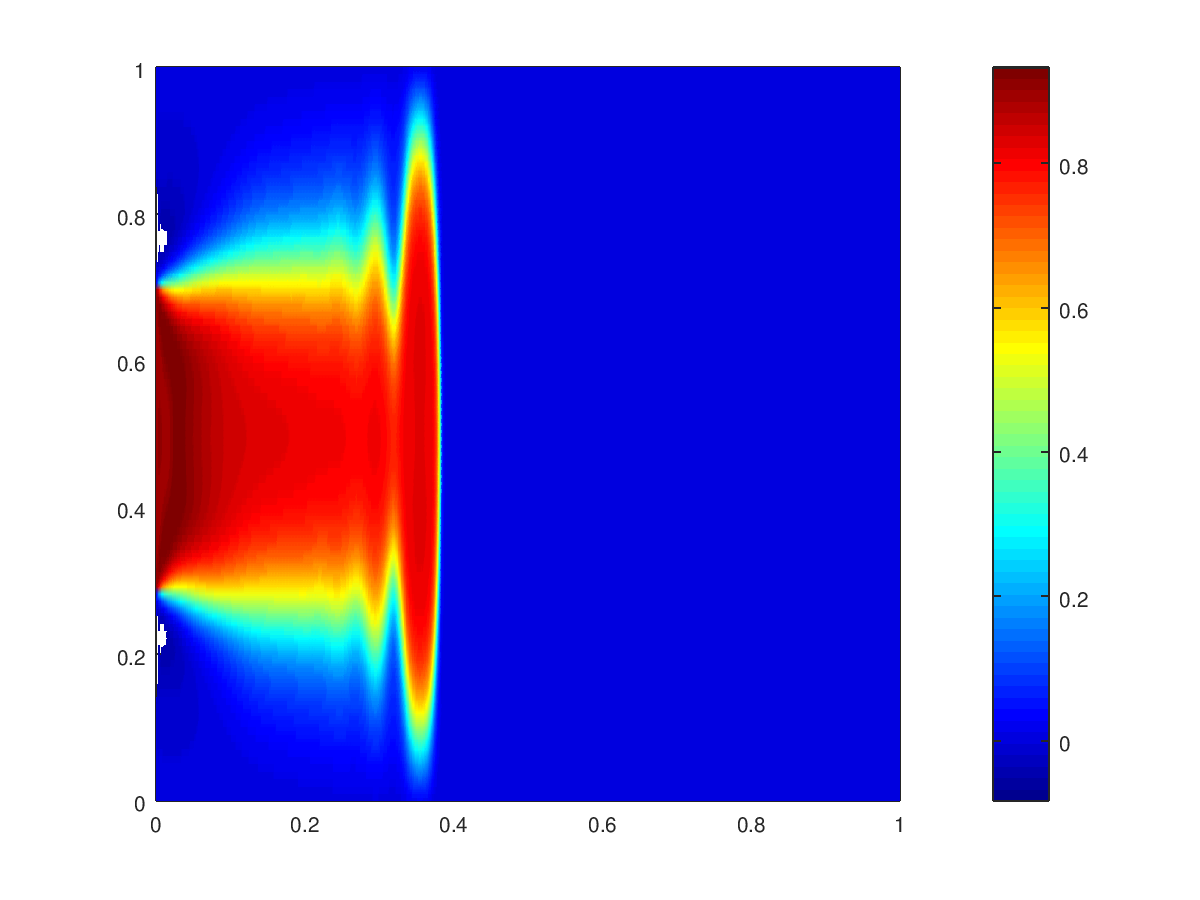}
\label{subfig:beta3}
}\\
\subfigure[BTP-model $\gamma=1/75$]{
\includegraphics[scale=0.18]{./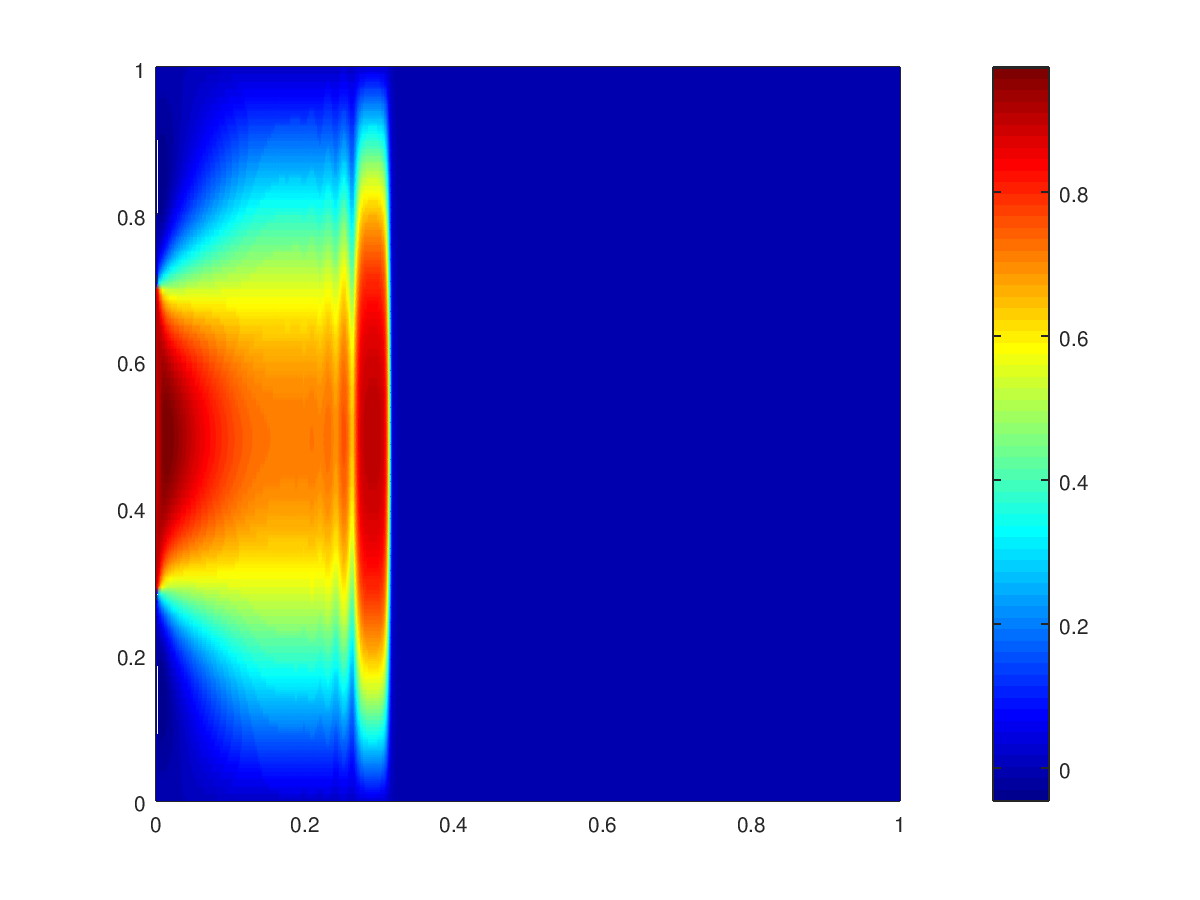}
\label{subfig:beta4}
}\hspace{-0.1cm}
\subfigure[BTP-model $\gamma=1/125$]{
\includegraphics[scale=0.18]{./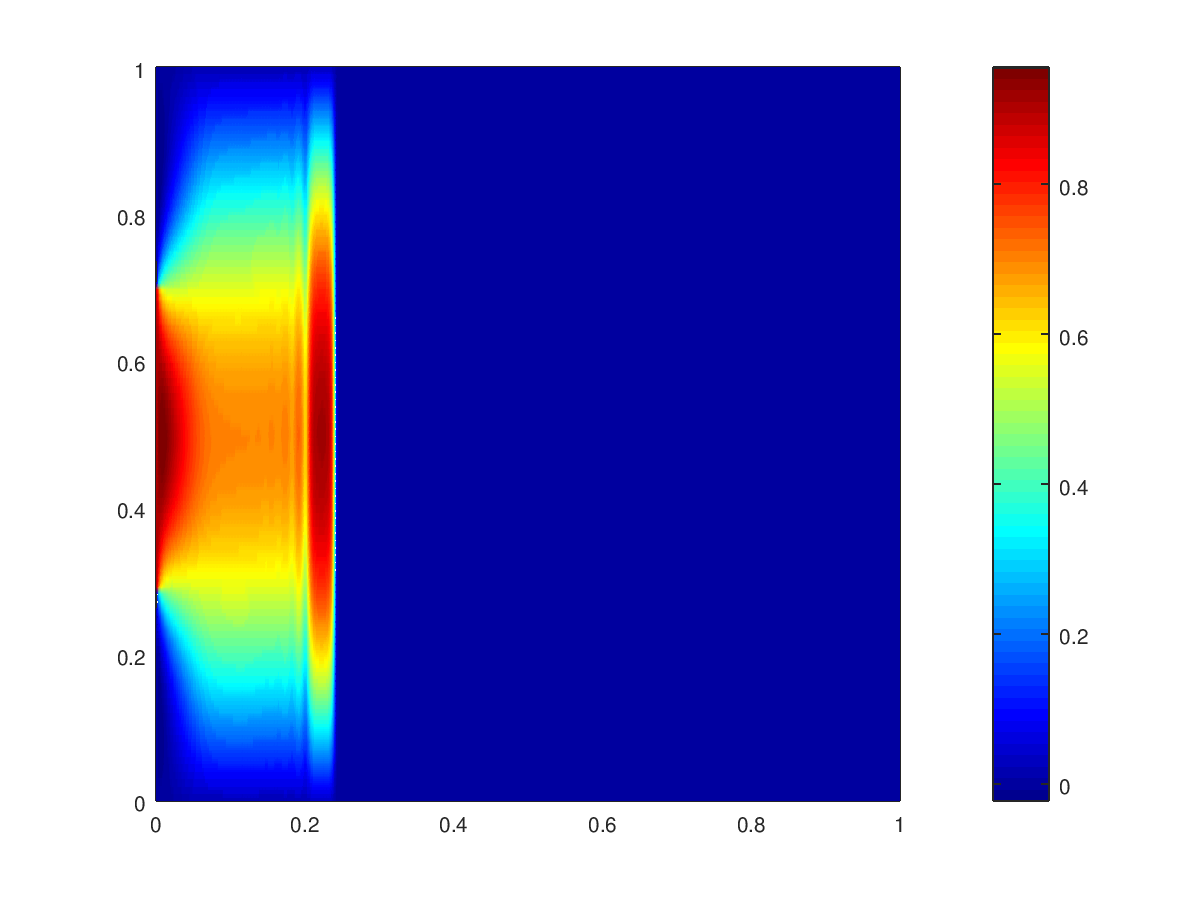}
\label{subfig:beta5}
}\hspace{-0.1cm}
\subfigure[BVE-model]{
\includegraphics[scale=0.18]{./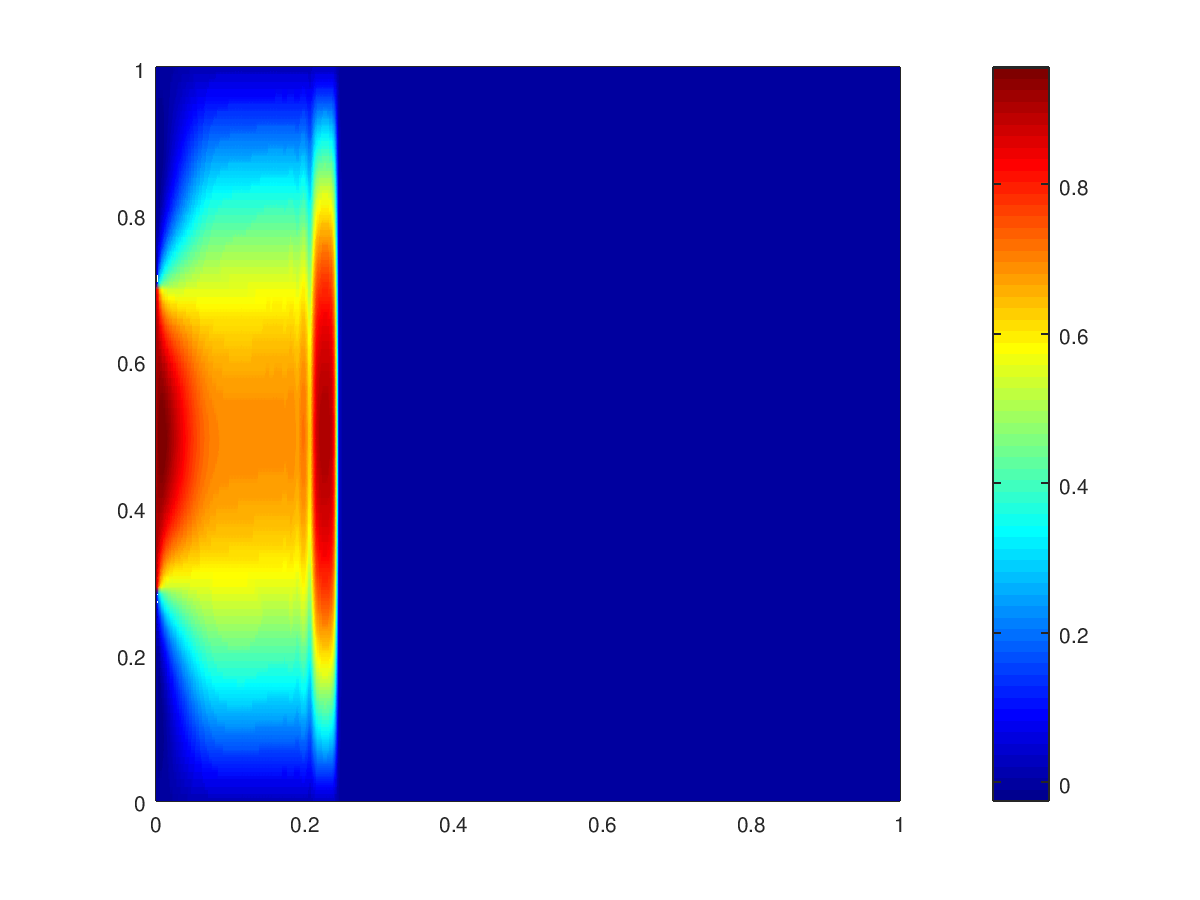}
\label{subfig:beta6}
}
\caption{Numerical solutions for the BTP-model \eqref{eq:BTP} in figures (a)-(e), with decreasing parameter $\gamma\in\{1,1/5,1/25,1/75,1/125\}$, converge to numerical solution for the BVE-model \eqref{eq:BVE}, \eqref{eq:velocity} in figure (f), using a $1000\times100$ grid, $M=2$, $\mu_e=10^{-2}$ and $T=0.3$.}
\label{fig:comparison1}
\end{figure}

In Figures \ref{subfig:beta1}-\ref{subfig:beta5}, we present the numerical solutions of the BTP-model \eqref{eq:BTP} using the parameters $\gamma\in\{1,1/5,1/25,1/75,1/125\}$, respectively, such that $L=5$ and $H\in\{5,1,1/5,1/15,1/25\}$. Figure \ref{subfig:beta6} presents the numerical solution of the BVE-model in the limit case with $L=5$ and $H=1/25$. The results in Figure \ref{fig:comparison1} suggest that numerical solutions for the BTP-model \eqref{eq:BTP} converge to the corresponding numerical solutions for the reduced BVE-model \eqref{eq:BVE}, \eqref{eq:velocity} as the geometrical parameter $\gamma$ tends to zero. This numerical convergence supports the theoretical results in Theorem \ref{thm:MainThm}.

\section{Conclusion}
\label{sec:conclusion}
We studied the limit of the two-phase flow model in porous media domains of Brinkman-type as the the domain's width--length ratio vanishes. We proved that weak solutions for this model converge to a weak limit. Further, we showed that the limit satisfies the definition of weak solutions for a model, in which pressure gradient is formulated as a nonlocal operator of saturation. 

The nonlocal model was first suggested in \cite{Armiti-Juber2018}  as a proper reduction of the full two-phase flow model in thin domains. It was derived using standard asymptotic analysis. However, the convergence analysis in this paper contributes to this model with a first rigid mathematical derivation.

\appendix
\section{The Dimensionless BTP-Model}
  \label{sec:dimensionless}
We consider the displacement process of two incompressible immiscible fluids in a saturated nondeformable porous medium of Brinkman type. The invading phase $\alpha=i$ is displacing the defending phase $\alpha=d$ under the assumption of negligible gravity and capillary forces. Then, the two-phase flow model consists of the continuity equation, the Brinkman equations and the incompressibility equation 
\begin{align}
\begin{array}{rl}
 \partial_{t} S_{\alpha} + \nabla\cdot \textbf{v}_{\alpha}&=0, \\
 - \mu_e \textbf{v}_{\alpha} + \textbf{v}_{\alpha}&=-\lambda_{\alpha}(S_{\alpha}) \textbf{K} \nabla p_{\alpha},\\
   \nabla\cdot \textbf{v}&=0
\end{array}
\label{eq:Brinkman-two-phase}
\end{align}
in $\Omega_{\gamma}\times (0,T)$, where $\Omega_{\gamma}=(0,L)\times(0,H)$ is a rectangular domain with the parameter $\gamma=H/L$. The intrinsic permeability tensor $\textbf{K} = \textbf{K}(x,z)$ is defined as $
 \textbf{K}(x,z) = \left(\begin{array}{c c}
               K_x(x,z) & 0 \\
               0 & K_z(x,z)
              \end{array}\right).$
We define the vector of generalized velocities
\begin{align}
 \textbf{V}_{\alpha} = - \mu_e \textbf{v}_{\alpha} + \textbf{v}_{\alpha},
 \label{eq:newdef}
\end{align}
such that $\textbf{V}_{\alpha}=(U_{\alpha},W_{\alpha})^T$. For this vector, we also define $\textbf{V}=\textbf{V}_{i}+\textbf{V}_{d}$, which satisfies the incompressibility-like equation
\begin{align}
 \nabla\cdot \textbf{V} = -\mu_e\,\Delta (\nabla\cdot \textbf{v}) +\nabla \cdot \textbf{v}=0.
\label{eq:incompressibilty2}
\end{align}

To derive the dimensionless BVE-model \eqref{eq:BTP} we rescale equation \eqref{eq:Brinkman-two-phase} using the dimensionless variables 
\begin{equation}
  \begin{array}{rlrlrlrl}
\overline{x}=&\dfrac{x}{L},\quad&\overline{z}=&\dfrac{z}{H},\quad & \overline{t}=&\dfrac{t}{L/q}, \quad& \kappa_{j}=&\dfrac{K_{j}}{k_{j}}\\
\overline{u}_{\alpha}=&\dfrac{u_{\alpha}}{q},\quad &\overline{w}_{\alpha}=&\dfrac{w_{\alpha}}{q},\quad & \overline{p}=&\dfrac{p}{Lq\mu_{d}/k_{x}},  
   \end{array}
 \label{eq:dimensionlessvariables}
\end{equation}
for $j\in\{x,z\}$ and $\alpha\in\{i,d\}$. Here, $q>0$ is the inflow speed at the inflow boundary $\partial\Omega_{\text{inflow}}$, $\mu_d$ is the viscosity of the defending phase and $k_j$ is the mean value of the corresponding permeability function $K_j$.
Applying the chain rule to \eqref{eq:newdef}, then defining the dimensionless components
\begin{align}
 \overline{U}_{\alpha}\coloneqq\dfrac{U_{\alpha}}{q}, \quad\quad \overline{W}_{\alpha}\coloneqq\dfrac{W_{\alpha}}{q},
 \label{eq:B-dime-compo}
\end{align}
yield
\begin{align}
\begin{array}{ll}
 \overline{U}_{\alpha}=&\overline{u}_{\alpha}-\frac{\mu_e}{L^2}\partial_{\overline{x}\overline{x}}\overline{u}_{\alpha}  -\frac{\mu_e}{H^2}\partial_{\overline{z}\overline{z}}\overline{u}_{\alpha},\vspace{0.1cm}\\
 \overline{W}_{\alpha}=&\overline{w}_{\alpha}-\frac{\mu_e}{L^2}\partial_{\overline{x}\overline{x}}\overline{w}_{\alpha}  -\frac{\mu_e}{H^2}\partial_{\overline{z}\overline{z}}\overline{w}_{\alpha}.
\end{array}
 \label{eq:operatorNon}
\end{align}
Applying the chain rule to equations \eqref{eq:Brinkman-two-phase} and \eqref{eq:incompressibilty2}, using equations \eqref{eq:dimensionlessvariables} and \eqref{eq:B-dime-compo}, then omitting the bar-signs leads to
\begin{align}
\begin{array}{rl}
 \partial_{t}S_{\alpha} +\partial_{x}u_{\alpha}+(1/\gamma) \partial_{z}w_{\alpha}&=0,\vspace{3pt}\\
 U_{\alpha}&=-\lambda_{\alpha}(S_{\alpha})\, \kappa_x\, \partial_{x} p_{\alpha},\vspace{3pt}\\
 (\gamma/\sigma) W_{\alpha}&=-\lambda_{\alpha}(S_{\alpha})\, \kappa_z\, \partial_{ z}  p_{\alpha},\vspace{3pt}\\
  \partial_{x}u+(1/\gamma) \partial_{z}w &=0,\vspace{3pt}\\
   \partial_{x}U+(1/\gamma) \partial_{z}W &=0
\end{array}
\label{eq:B-Dimensionlesstwo-phase2}
\end{align}
in $\Omega\times(0,T)$, for both invading and defending phases $\alpha\in\{i,d\}$ and $\sigma=k_z/k_x$. Now, applying the operator $1-\beta_1\partial_{xx}-\beta_2\partial_{zz}$ to the continuity equation, where $\beta_1=\frac{\mu_e}{L^2}$ and $\beta_2=\frac{\mu_e}{H^2}$, transforms model \eqref{eq:B-Dimensionlesstwo-phase2} to
\begin{align}
\begin{array}{rl}
 \partial_{t}S_{\alpha} -\beta_1\partial_{xxt}-\beta_2\partial_{zzt}S_{\alpha} + \partial_{x}U_{\alpha}+(1/\gamma) \partial_{z}W_{\alpha}&=0,\vspace{3pt}\\
 U_{\alpha}&=-\lambda_{\alpha}(S_{\alpha})\, \kappa_x\, \partial_{x} p_{\alpha},\vspace{3pt}\\
 (\gamma/\sigma) W_{\alpha}&=-\lambda_{\alpha}(S_{\alpha})\, \kappa_z\, \partial_{ z}  p_{\alpha},\vspace{3pt}\\
 \partial_{x}U+(1/\gamma) \partial_{z}W &=0.
\end{array}
\label{eq:B-Dimensionlesstwo-phase-22}
\end{align}

The assumption of negligible capillary pressure implies $p_i=p_d=:p$ and the phases' velocities satisfy 
\begin{align}
  U_{\alpha}=f(S_{\alpha})U,\quad\quad  W_{\alpha}=f(S_{\alpha})W.
  \label{eq:Dimensionlessfrac}
\end{align}
We also set $\kappa=1$ to simplify the analysis in this paper, and we define the variable $Q=W/\gamma$. Then, the dimensionless model \eqref{eq:B-Dimensionlesstwo-phase2} is summarized such that the unknown variables $S,\,p,\,U$, and $Q$ are associated with the parameter $\gamma$,
\begin{align}
\begin{array}{rl}
\partial_{t}S^{\gamma}-\beta_1\partial_{xxt} S-\beta_2\partial_{zzt}S&+\partial_{x}\left(f(S^{\gamma})U^{\gamma}\right)+ \partial_{z}\left(f(S^{\gamma})Q^{\gamma}\right)=0,\\
U^{\gamma}&=-\lambda_{tot}(S^{\gamma})  \partial_{x} p^{\gamma},\\
 \gamma^2 Q^{\gamma}&=-\lambda_{tot}(S^{\gamma})  \partial_{z} p^{\gamma},\\
  \partial_{x}U^{\gamma}+ \partial_{z}Q^{\gamma} &=0,
\end{array}
\label{eq:B-Dimensionlessfractionalformulation}
\end{align}
where $S=S_i$ is the saturation of the invading fluid.

\section{Asymptotic Analysis}
\label{sec:asymptotic}
The BVE-model is derived in \cite{Armiti-Juber2018} by applying formal asymptotic analysis, with respect to $\gamma$, to the dimensionless BTP-model \eqref{eq:B-Dimensionlessfractionalformulation}. We assume that each component in $(S^{\gamma},p^{\gamma},U^{\gamma}, Q^{\gamma})$ is smooth and can be written in terms of the asymptotic expansions
\begin{align}
 \begin{array}{ll}
  Z^{\gamma}=Z_{0}+\gamma Z_{1}+\mathcal{O}(\gamma^2),\,& Z^{\gamma}\in\{S^{\gamma},\,p^{\gamma},\,U^{\gamma}, Q^{\gamma}\}.
 \end{array}
 \label{eq:B-asymptoticexpansion}
\end{align}
Using the asymptotic expansion of $S^{\gamma}$ in \eqref{eq:B-asymptoticexpansion} and Assumption \ref{ass:Bexistence}, we have the Taylor expansions
\begin{align}
\begin{array}{cl}
G(S^{\gamma})&=G(S_{0})+ G'(S_{0})(\gamma S_{1})+\mathcal{O}(\gamma^{2}),
\end{array}
 \label{eq:B-asymptoticexpansion1}
\end{align}
for $G\in\{\lambda_{tot},\,f\}$. The incompressibility relation in \eqref{eq:B-Dimensionlessfractionalformulation} allows writing the continuity equation in nonconservative form. Substituting equation \eqref{eq:B-asymptoticexpansion} and \eqref{eq:B-asymptoticexpansion1} into \eqref{eq:B-Dimensionlessfractionalformulation}, the terms of order $\mathcal{O}(1)$ satisfy
\begin{align}
\begin{array}{rl}
 \partial_{t}S_{0}-\beta_1\partial_{xxt}S_{0}-\beta_2\partial_{zzt}S_{0}+\partial_{x}\big(f(S_{0})U_{0}\big)\vspace{5pt}+ \partial_{z}\big(f(S_{0})Q_0\big)&=\mathcal{O}(\gamma),\vspace{5pt}\\
  U_{0}&=-\lambda_{tot}(S_{0})  \partial_{x} p_{0},\vspace{5pt}\\
  \lambda_{tot}(S_{0}) \partial_{z} p_{0}&=\mathcal{O}(\gamma^2),\vspace{5pt}\\
 \partial_{x}U_{0}+\partial_{z}Q_{0} &=\mathcal{O}(\gamma).
\end{array}
\label{eq:B-asymptotic}
\end{align}
Using the positivity of the total mobility $\lambda_{tot}$ (see Assumptions \ref{ass:Bexistence}(6)), the third equation of \eqref{eq:B-asymptotic} implies that $p_0$ is independent of the $z$-coordinate,
\begin{equation}
 p_{0}=p_{0}(x,t).
 \label{eq:B-pressure-y-indep}
\end{equation}
Integrating the last equation in \eqref{eq:B-asymptotic} over the vertical direction from $0$ to $1$ and using the assumption of impermeable upper and lower boundaries of the domain $\partial_{\text{imp}}\Omega$ in \eqref{eq:IBC-BTP}, we obtain
\begin{eqnarray*}
\partial_{x}\int_{0}^{1}U_{0}\,dz =-\int_{0}^{1}\partial_{z}Q_{0}\,dz=0.
\end{eqnarray*}
Integrating this equation from $0$ to $x$ yields 
\begin{align}
  \int_{0}^{1}U_{0}(x,z,t)\,dz- h(t)=0,
  \label{eq:U1}
\end{align}
for any $x\in(0,1)$ and $t\in[0,T]$, where $h(t)=\int_{0}^{1}U_{0}(0,z,t)\,dz$ is the averaged horizontal velocity at the inflow boundary. Substituting the second equation in \eqref{eq:B-asymptotic} into equation \eqref{eq:U1} yields
\begin{equation*}
-\int_{0}^{1}\lambda_{tot}(S_{0}) \partial_{x} p_{0}\,dz=h(t).
\end{equation*}
Then, using equation \eqref{eq:B-pressure-y-indep}, we have
\begin{equation}
\partial_{x} p_{0}(x,t)=-\dfrac{h(t)}{\int_{0}^{1}\lambda_{tot}(S_{0}(x,z,t)) \,dz},
\label{eq:U2}
\end{equation}
for all $x\in (0,1)$ and $t\in(0,T)$. Substituting \eqref{eq:U2} into the second equation in \eqref{eq:B-asymptotic}, we obtain a nonlocal saturation-dependent formula for $U_0$,
\begin{equation}
 U_{0}[S_{0}]=\dfrac{h(t)\lambda_{tot}\bigl(S_{0}\bigr) }{\int_{0}^{1}\lambda_{tot}\bigl(S_{0}\bigr) \,dz},
  \label{eq:U3}
\end{equation}
for all $(x,z)\in \Omega$ and $t\in(0,T)$. Consequently, the incompressibility relation in \eqref{eq:B-asymptotic} yields also a nonlocal saturation-dependent formula for $Q_0$,
\begin{equation}
 Q_0[S_0]=-\partial_{x}\int_{0}^{z}U_0[S_0(\cdot,r,\cdot)]\,dr,
 \label{eq:U4}
\end{equation}
for all $(x,z)\in \Omega$ and $t\in(0,T)$.
Using equation \eqref{eq:U3} and \eqref{eq:U4}, omitting the subscript $\{0\}$, system \eqref{eq:B-asymptotic} reduces to a third-order nonlocal nonlinear equation of saturation
\begin{align}
 \partial_{t}S+\partial_{x}\left(f(S)U\right)+\partial_{z}\left(f(S)Q\right)-\beta_1\partial_{xxt}S-\beta_2\partial_{zzt}S_{0} =0,
\label{eq:Brinkman1}
\end{align}
in $\Omega\times (0,T)$ where we have for all $z\in(0,1)$
\begin{align}
\begin{array}{rl}
 U[S]&=\dfrac{\lambda_{tot}(S)}{\int_{0}^{1}\lambda_{tot}(S) \,dz}, \vspace{5pt}\\
 Q[S]&=-\partial_{x}\int_{0}^{z}U[S(\cdot,r,\cdot)]\,dr .
 \end{array}
 \label{eq:Brinkman2}
\end{align}
\medskip
\textbf{Acknowledgments. }
The author would like to thank Prof. Iuliu Sorin Pop, from Hasselt University, for the fruitful discussion and valuable comments.

%
%
%
%
%

\bibliographystyle{plain}
\bibliography{BTPtoBVE}

\end{document}